\numberwithin{equation}{section}
\numberwithin{figure}{section}
\numberwithin{table}{section}
\renewcommand{\p@subfigure}{\thefigure}
\newtheorem{theorem}{Theorem}[section]
\newtheorem{lemma}[theorem]{Lemma}
\newtheorem{corollary}[theorem]{Corollary}
\newtheorem{rem}[theorem]{Remark}
\newtheorem{definition}{Definition}[section]
\theoremstyle{remark}
\newtheorem*{remark}{Remark}
\newcommand{\nnz}{\operatorname{nnz}}
\newcommand{\newdef}{\stackrel{\Delta}{=}}
\newcommand{\norm}[1]{\left \| #1 \right \|}
\newcommand{\pr}[2]{\langle {#1} , {#2} \rangle}
\def \e {\varepsilon}
\def \l {\lambda}
\def \R {\mathbb{R}}
\def \N {\mathbb{N}}
\def \E {\mathbb{E}}
\def \P {\mathbb{P}}
\def \MM {\mathcal{M}}
\def \NN {\mathcal{N}}
\begin{document}

\title{Matrix Decompositions using sub-Gaussian Random Matrices}

\author
{Yariv Aizenbud${^1}$~~Amir Averbuch${^2}$\\
${^1}$School of Applied Mathematics, Tel Aviv University, Israel\\
${^2}$School of Computer Science, Tel Aviv University, Israel
}
\maketitle

\begin{abstract}

In recent years, several algorithms, which approximate matrix
decomposition, have been developed. These algorithms are based on
metric conservation features for linear
spaces of random projection types. We show that an i.i.d sub-Gaussian matrix with large probability to have zero entries is metric conserving. We also present a new algorithm, which achieves with high probability, a rank $r$ decomposition approximation for an $m \times n$ matrix that has an asymptotic complexity like state-of-the-art algorithms.
We derive an error bound that does not depend on the first $r$ singular values.
Although the proven error bound is not as tight as the state-of-the-art bound, experiments show that the proposed algorithm is faster in practice, while getting the same error rates as the state-of-the-art algorithms get.

    \smallskip

    \noindent \textbf{Keywords.} SVD decomposition, LU decomposition, Low rank approximation, random matrices, sparse matrices, sub-Gaussian matrices, Johnson-Lindenstrauss Lemma, oblivious subspace embedding.
\end{abstract}

\section{Introduction}

Dimensionality reduction by randomized linear maps preserves metric
features. The Johnson-Lindenstrauss Lemma (JL)
\cite{johnson1984extensions} shows that there is a random
distribution of linear dimensionality reduction operators that
preserves, with bounded error and high probability, the norm of a
set of vectors. For example, Gaussian random
matrices satisfy this property.

JL Lemma was extended in the following way. While
the classical formulation dealt  with norm conservation of sets of
vectors, the JL-based extension deals with a subspace of a vector space. This
extension is considered for example in \cite{tropp2011improved},
where it shows that Fourier based random matrices of size $n \times
\mathcal{O}(r\log r)$ conserves the norm of all the vectors from a
vector space of dimension $r$. Similar results for sparse matrices
distribution are given in
\cite{nelson2013osnap,dasgupta2010sparse,kane2014sparser,clarkson2013low}.

In recent years,  several algorithms that approximate matrix
decomposition, which are based on norm conservation, have been
developed. The idea is roughly as follows: A randomly drawn matrix $\Omega$, which projects the original matrix into a lower dimension, is used. The decomposition is calculated in the low dimensional space. Then, this decomposition is mapped into the matrix original size. It is shown in \cite{martinsson2011randomized,shabat2013randomized}  how to
use random Gaussian matrices in order to find, with high
probability, an approximated interpolative decomposition,
singular value decomposition (SVD) and LU decomposition. FFT-based random matrices, which
approximate matrix decompositions, are described in
\cite{woolfe2008fast}.  The special structure of the FFT-based
distribution  provides a  fast matrix multiplication that yields a
faster algorithm than the algorithms in
\cite{martinsson2011randomized}. A comprehensive review of these
ideas (and many more) is given in \cite{halko2011finding}. The
algorithm in \cite{clarkson2013low} uses a sparse random matrix
distribution that makes the matrix multiplication step in the
algorithm even faster than what the FFT-based matrices provide.

In this paper, we show that the class of matrices with i.i.d sub-Gaussian entries satisfy
the image conservation property even when the probability for a zero entry grows with the size of the matrix. Additionally, we construct
fast SVD and LU decomposition algorithms with bounded error and asymptotic complexity equal to the asymptotic complexity of the state-of-the-art algorithm. Although the asymptotic complexity is the same, the practical running time of the presented algorithms is lower than the existing algorithms. Since the random projections are matrices with i.i.d entries, it is not required to set the dimension $k$ of the projection in advance. It is possible, although not elaborated in this paper, to increase $k$ iteratively, until the resulting approximation is in the required accuracy. Stronger bounds for the case of sparse-Bernoulli random matrices are shown in \cite{cohen2016nearly}\footnote{ The results on sub-Gaussian random matrices in this paper were derived couple of months before the paper of \cite{cohen2016nearly} was brought to our attention.}

We denote by $M_{n \times m}$ the set of $n$ by $m$ matrices. 
We call a  rectangular random matrix distribution $\MM$ an
{\it metric conserving} distribution if for any $A \in M_{n\times m}$
a randomly chosen $\Omega \in M_{m \times k}$ from $\MM$,
the image of $A\Omega$ is similar to the image of $A$. Three main
parameters related to this property are the dimension $k$ of $\Omega$ (the smaller the
better), the ``distance" between the images of $A\Omega$ and $A$ and the
probability for which the image conservation is valid. It is obvious
that these parameters are connected. Distributions, which conserve the norm allowing an error $(1+\varepsilon)$ of the theoretical bound, are called {\it oblivious subspace embedding (OSE)} (\cite{nelson2013osnap}).

The theoretical bound for a rank $r$ approximation of a matrix A in $L_2$ norm is $\sigma_{r+1}(A)$ and in Frobenius norm it is $\Delta_{r+1}$, where $\sigma_r(A)$ is the $r$th largest singular value of $A$ and $\Delta_r(A) \stackrel{\Delta}{=} (\sum_{l=r}^{n}\sigma_l^2(A))^{1/2}$. Three important results related to the above parameters, which deal with  metric conserving distributions in the context of randomized decomposition algorithms, are: 
1. Achieving an accuracy of $\mathcal{O}_\sigma(\sigma_{r+1}(A))$ for a rank $r$ measured in $L_2$ norm
with high probability, is described in \cite{halko2011finding,martinsson2011randomized}. To
achieve this accuracy with high probability, the required $\Omega$ can be an i.i.d Gaussian matrix of size $\mathcal{O}(r)$. 
2. Achieving an accuracy of $\mathcal{O}_\sigma(\sigma_{r+1}(A))$ for a rank $r$ measured in $L_2$ norm
with high probability, is described in \cite{halko2011finding,woolfe2008fast}. To
achieve this accuracy with high probability, $\Omega$ can be an FFT-based matrix of size $\mathcal{O}(r \log r)$. 
3. The result in \cite{nelson2013osnap} achieves accuracy of $(1+\e)\Delta_{r+1}(A) $ with high probability measured in Frobenius norm. While $\Omega$ is drawn from a sparse distribution, its size is assumed to be not less than $\mathcal{O}(r^2/\e^2)$. In fact, for sparse matrices distribution, a lower bound for the size of $\Omega$ is provided in \cite{nelson2013sparsity}. 

We show in Section \ref{sec:rectangular} that for the class of matrices with i.i.d 
sub-Gaussian entries, the size of $\Omega$, which is needed to achieve an
accuracy $\mathcal{O}_\sigma(\sigma_{r+1}(A))$  measured in $L_2$ norm. We also show its dependency on the probability to have a zero entry. By choosing a sparse matrix distribution to
be sub-Gaussian, we were able to perform a fast matrix
multiplication while having a small size $\Omega$. It is shown in 
\cite{dirksen2014dimensionality} that this class of sub-Gaussian matrices of size $\mathcal{O}(r/\varepsilon^2)$ with constant probability distribution is an OSE. In this paper, we provide a bound for the case where the distribution depends on the size of the matrix.

The state-of-the-art result for rank $r$ approximation algorithm appears in \cite{clarkson2013low}. It describes how to use a sparse embedding matrix to construct an algorithm that finds for any matrix $A \in M_{m \times n}$ and any rank $r$, with high probability, an SVD approximation of rank $r$. Namely, orthogonal  $U,V^*$ and a diagonal matrix $\Sigma$ are formed such that  $\|A-U\Sigma V^*\|_F \leq (1+\e)\Delta_{r+1}(A) $. 
Although the algorithm in \cite{halko2011finding} uses a smaller $\Omega$ than \cite{clarkson2013low}, the algorithm in \cite{clarkson2013low} is asymptoticly faster than the algorithm in \cite{halko2011finding} because of the sparse nature of the projection.

We describe in Section
\ref{sec:Sparse_Randomized_SVD} an algorithm that for each $A\in
M_{m\times n}$ outputs with high probability a low rank SVD approximation that is built from
$U, \Sigma$ and $V$. The algorithm works with any metric conserving or OSE random distribution.
The size $k$ of the random embedding in the algorithm depends on the probability $p$ for having a zero entry. The complexity of the algorithm when using i.i.d sub-Gaussian random matrix projections is $\mathcal{O}(\nnz(A)pk + (m+n)k^2)$ where $\nnz (A)$ denotes the {\it number of non-zeros} in $A$ and $ k = \mathcal{O}(\frac{1}{p^3}\ln r)$. For sparse embedding matrix distribution as in \cite{nelson2013osnap}, the complexity of the algorithm in Section \ref{sec:Sparse_Randomized_SVD} is the same as in \cite{nelson2013osnap}.
This algorithm guarantees with high probability that
$\Vert A-U\Sigma V^* \Vert_2 \le \mathcal{O}_\sigma(\sigma_{r+1}(A))$. Although the guaranteed error bound is less tight than the one in \cite{clarkson2013low}, we show in Section \ref{subsec:numerical_results} that in practice our algorithm reaches the same error in less time.

The randomized LU decomposition algorithm in \cite{sparseLU} is based on the ideas from \cite{clarkson2013low}. 
We show in Section \ref{sec:Sparse_Randomized_LU} that it is also valid when random matrices from a sub-Gaussian
distribution are chosen with the complexity and error bound equal to 	
those from the SVD decomposition.

The paper has the following structure: In Section 2, we present the necessary mathematical preliminaries. In Section 3, we show that i.i.d sub-Gaussian random matrices are metric conserving and in Section 4 we describe the SVD algorithm and show that the LU algorithm in \cite{sparseLU} is valid with i.i.d sub-Gaussian random matrices. In section 5, we present the numerical results of the described SVD algorithm.

\section{Preliminaries}

\subsection{The $\e$-Net}\label{sec:epsNet}

$\e$-net is defined in Definition \ref{def:eps_net}. Its size is  bounded by Lemma
\ref{lem:eps_net_size} that is proved in
\cite{rudelson2009smallest}.
Throughout the paper, $ S^{n-1}$ denotes the $(n-1)$-sphere in $\R^{n}$.

\begin{definition}\label{def:eps_net}
    Let $(T,d)$ be a metric space and let $K \subset T$. A set
    $\NN \subset T$ is called $\e$-net of $K$ if for all $x \in K$ there exists $y \in
    \NN$
    such that $ \ d(x,y)<\e$.

\end{definition}
\begin{lemma}[Proposition 2.1 in \cite{rudelson2009smallest}] \label{lem:eps_net_size}
    For any $\e<1$, there exists an $\e$-net $\NN$  of $S^{n-1}$ such that
    $$
    |\NN| \le 2n \left(1+ \frac{2}{\e} \right)^{n-1}.
    $$
\end{lemma}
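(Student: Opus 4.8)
The plan is to build the net by a packing (volume-comparison) argument, which simultaneously yields the exact constants in the stated bound. First I would choose a maximal $\e$-separated subset $\NN = \{x_1,\dots,x_N\}$ of $S^{n-1}$, meaning $\norm{x_i-x_j}\ge\e$ for all $i\ne j$ and no further point of $S^{n-1}$ can be added without destroying this property. Such a set exists and is finite: since $S^{n-1}$ is compact, the open balls $B(x_i,\e/2)$ are disjoint (if $y$ lay in two of them the triangle inequality would force $\norm{x_i-x_j}<\e$), so only finitely many separated points fit, and a maximum-cardinality separated set is in particular maximal. Maximality is exactly what makes $\NN$ an $\e$-net: for any $x\in S^{n-1}$ either $x\in\NN$, or adjoining $x$ would violate separation, so some $x_i$ satisfies $\norm{x-x_i}<\e$. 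Thus it remains only to bound $N$.

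The key estimate comes from comparing volumes in $\R^n$. Because each center has $\norm{x_i}=1$ and $\e<1$, every ball $B(x_i,\e/2)$ is contained in the annulus $\{\,x : 1-\e/2 \le \norm{x} \le 1+\e/2\,\}$, and these balls are pairwise disjoint. Writing $v_n$ for the volume of the unit Euclidean ball, disjointness and containment give
$$
N\, v_n\left(\tfrac{\e}{2}\right)^n \;\le\; v_n\!\left[\left(1+\tfrac{\e}{2}\right)^n - \left(1-\tfrac{\e}{2}\right)^n\right],
$$
so that, after cancelling $v_n$ and setting $t=\e/2$,
$$
N \;\le\; \frac{(1+t)^n-(1-t)^n}{t^n}.
$$

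The main point — and the only place where care is needed — is extracting the sharp form $2n(1+2/\e)^{n-1}$ rather than the crude $(1+2/\e)^n$ that a one-ball bound would give. For this I would apply the mean value theorem to $f(s)=s^n$ on $[1-t,1+t]$: there is $\xi\in(1-t,1+t)$ with
$$
(1+t)^n-(1-t)^n = 2t\,n\,\xi^{\,n-1} \le 2t\,n\,(1+t)^{n-1}.
$$
Dividing by $t^n$ yields
$$
N \;\le\; \frac{2t\,n\,(1+t)^{n-1}}{t^n} = 2n\left(\frac{1+t}{t}\right)^{n-1} = 2n\left(1+\frac{1}{t}\right)^{n-1} = 2n\left(1+\frac{2}{\e}\right)^{n-1},
$$
which is the claimed bound. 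The expected obstacle is precisely this last refinement: the naive volumetric argument loses a full factor of $(1+2/\e)$, and recovering the correct exponent $n-1$ together with the linear prefactor $2n$ relies on restricting the containing region to the thin annulus and on the mean-value (equivalently, convexity) estimate for the difference $(1+t)^n-(1-t)^n$.
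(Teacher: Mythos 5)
Your proof is correct: the maximal $\e$-separated set argument, the disjointness of the balls $B(x_i,\e/2)$, their containment in the annulus $\{1-\e/2\le\norm{x}\le 1+\e/2\}$, and the mean-value estimate for $(1+t)^n-(1-t)^n$ all check out and together yield exactly the stated constant $2n\left(1+\frac{2}{\e}\right)^{n-1}$. The paper itself gives no proof — it simply cites Proposition 2.1 of \cite{rudelson2009smallest} — and your volumetric shell argument is essentially the one used in that reference, so there is nothing to flag.
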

\begin{remark}
    It follows that for sufficiently large $n$, the size of $1/2$ - net of $S^{n-1}$ has at most
    $$
    2n \left(1+ \frac{2}{1/2} \right)^{n-1} = 2n \cdot 5^{n-1} \leq 6^n
    $$
    points.
\end{remark}

\subsection{Compressible and Incompressible Vectors} \label{sec:comp_incomp}
\begin{definition}
	A vector $v \in \R^n$ is called \it{$(\eta,\e)$-incompressible} if $ \sum\limits_{j:|v_j| \leq \e} |v_j|^2 \geq \eta^2$ and compressible otherwise.
\end{definition}

\begin{lemma}\label{lem:eps_net_compressible_size}
	Let $U \subset \R^n$ a subspace of dimension $r$. Let $\NN $ be an $\e_{net}$-net  of the set of  $(\eta, \e_c)$-compressible vectors in $U$. Then, 
	$$
	|\NN| \leq r^{\frac{1}{\e_c^2}} \eta^{r-\frac{1}{\e_c^2}} \left(\frac{c_{net}}{\e_{net}}\right)^r
	$$
	for an absolute constant $c_{net}$.
\end{lemma}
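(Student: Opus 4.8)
The plan is to exploit that an $(\eta,\e_c)$-compressible unit vector is almost supported on a small number of coordinates, and that these coordinates are moreover confined to a set whose size is controlled by $r$ rather than by $n$. Throughout write $s=1/\e_c^2$ and let $P_U$ denote the orthogonal projection onto $U$.

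First I would record the two elementary consequences of compressibility for a unit vector $v\in U$. Since $\norm{v}=1$, the set $T_v=\{j:|v_j|>\e_c\}$ of large coordinates has $|T_v|\le 1/\e_c^2=s$, because each such coordinate contributes more than $\e_c^2$ to $\norm{v}^2=1$; and compressibility gives $\norm{P_{T_v^c}v}^2=\sum_{j:|v_j|\le\e_c}|v_j|^2<\eta^2$. Next I would localize the admissible supports: for every $j$ one has $\max_{v\in U,\,\norm{v}=1}|v_j|=\norm{P_Ue_j}$, so any large coordinate must lie in $G=\{j:\norm{P_Ue_j}>\e_c\}$. Because $\sum_j\norm{P_Ue_j}^2=\operatorname{tr}P_U=r$, we get $|G|\le r/\e_c^2=rs$. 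Hence every compressible vector has its large coordinates inside the fixed set $G$, and the number of candidate supports $T\subseteq G$ with $|T|\le s$ is at most $\binom{rs}{\le s}\le (er)^s$, which equals $r^s$ up to a factor $c^s$ that, since $s\le r$, will later be absorbed into the final constant.

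With supports localized, I would bound the covering number of the compressible set by a union over these $\le(er)^s$ supports of the covering numbers of $K_T=\{v\in U:\norm{v}\le1,\ \norm{P_{T^c}v}\le\eta\}$; by the previous paragraph every compressible $v$ lies in $K_{T_v}$ with $T_v\subseteq G$, so a cover of each $K_T$ at scale $\e_{net}$ yields a cover of the whole set. For a fixed $T$ the heart of the matter is an anisotropic covering of $K_T$. Working in an orthonormal basis $\psi_1,\dots,\psi_r$ of $U$ that diagonalizes $P_{T^c}$ restricted to $U$ (this is an isometric identification $U\cong\R^r$, so distances remain the ambient ones and no lift or conditioning distortion enters), a vector $v=\sum_i c_i\psi_i$ satisfies $\norm{v}^2=\sum_i c_i^2$ and $\norm{P_{T^c}v}^2=\sum_i \sigma_i^2 c_i^2$, where $\sigma_1,\dots,\sigma_r\in[0,1]$ are the associated singular values. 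The trace identity $\sum_i(1-\sigma_i^2)=\sum_{j\in T}\norm{P_Ue_j}^2\le|T|\le s$ forces all but $O(s)$ of the $\sigma_i$ to be close to $1$, so the constraint $\norm{P_{T^c}v}\le\eta$ pins $|c_i|\lesssim\eta$ in at least $r-O(s)$ of the coordinate directions while leaving only $|c_i|\le1$ in the remaining $O(s)$ of them. Covering the resulting ellipsoid at scale $\e_{net}$ then gives a net of size at most $(c/\e_{net})^{s}(c\eta/\e_{net})^{r-s}=\eta^{r-s}(c/\e_{net})^r$.

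Finally I would multiply the per-support estimate by the number $(er)^s$ of supports and collect constants, using $s\le r$ to fold factors such as $e^s$ and $c^s$ into a single absolute base, obtaining $|\NN|\le r^{1/\e_c^2}\,\eta^{\,r-1/\e_c^2}\,(c_{net}/\e_{net})^r$. I expect the main obstacle to be the third step: producing the anisotropic net of $K_T$ with an absolute constant and the exact exponents. The entire saving rests on the trace bound $\sum_i(1-\sigma_i^2)\le s$, which must be converted into a clean count of ``wide'' versus ``thin'' principal axes of the ellipsoid; making the threshold land at essentially $s$ and keeping the covering constant independent of $U$, $n$, $\eta$ and $\e_c$ is the delicate point, and is precisely why the argument is organized around an orthonormal---hence isometric---basis of $U$ rather than around the ambient coordinate supports, which would otherwise reintroduce an $n$-dependent (or badly conditioned) factor.
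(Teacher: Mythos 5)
Your proposal follows the same strategy as the paper's own proof---approximate each compressible unit vector by a vector supported on at most $1/\e_c^2$ coordinates, control the number of admissible supports, and cover each sparse piece at scale $\e_{net}$---but you carry it out with far more care than the paper does. The paper's one-line assertion that ``the volume of $\eta$-balls around $1/\e_c^2$-sparse vectors is $r^{1/\e_c^2}\eta^{r-1/\e_c^2}$'' silently assumes that the number of candidate supports is governed by $r$ rather than by the ambient dimension $n$, even though the sparsity is with respect to the coordinates of $\R^n$. Your localization of the large coordinates to $G=\{j:\norm{P_Ue_j}>\e_c\}$ via $\sum_j\norm{P_Ue_j}^2=\operatorname{tr}P_U=r$ supplies exactly the missing justification, and the trace identity $\sum_i(1-\sigma_i^2)\le |T|$ is the right device for making the per-support covering rigorous inside $U$ rather than in ambient coordinates. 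So your argument is best read as a repair of the paper's proof rather than a departure from it.

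One quantitative loose end, which you yourself flag as the delicate point: the wide/thin dichotomy you invoke classifies the principal axes by whether $\sigma_i^2\ge 1/2$, which bounds the number of ``thin'' directions by $2/\e_c^2$, not $1/\e_c^2$. The per-support covering you actually obtain is therefore of order $\eta^{r-2/\e_c^2}(c/\e_{net})^r$, which is weaker than the stated $\eta^{r-1/\e_c^2}(c/\e_{net})^r$ since $\eta<1$, and the surplus factor $\eta^{-1/\e_c^2}$ cannot in general be absorbed into $c_{net}^r$. (A similar caveat applies to folding $(er)^{1/\e_c^2}$ into the constant, which needs $1/\e_c^2=\mathcal{O}(r)$.) This costs nothing where the lemma is used---in Theorem \ref{thm:rectangular_small_val_subspace} it only rescales constants in the lower bound on $k$---but to match the statement literally you would need either a sharper ellipsoid-covering estimate, e.g.\ via the product bound $\prod_i\bigl(1+c\min(1,\eta/\sigma_i)/\e_{net}\bigr)$, or to restate the lemma with $2/\e_c^2$ in the exponents.
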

\begin{proof}
	The $(\eta, \e_c)$-compressible vectors are in an $\eta$ distance from a sparse vector with no more than $\frac{1}{\e_c^2}$ non-zero coordinates. For small enough $\eta$, the volume of $\eta$-balls around \mbox{$\frac{1}{\e_c^2}$ - sparse} vectors is $r^{\frac{1}{\e_c^2}} \eta^{r-\frac{1}{\e_c^2}} $. The same arguments from the proof of Lemma \ref{lem:eps_net_size} show that the number of points in an $\e_{net}$-net of this volume is not more than $r^{\frac{1}{\e_c^2}} \eta^{r-\frac{1}{\e_c^2}} \left(\frac{c_{net}}{\e_{net}}\right)^r$.
\end{proof}

\subsection{Sub-Gaussian Random Variables} \label{sec: subgaussian}

In this section, we introduce the sub-Gaussian random variables with
some of their properties. Sub-Gaussian variables are an important
class of random variables that have strong tail decay properties.
This class contains, for example, all the bounded random variables
and the normal variables.

\begin{definition}
    A random variable  $X$ is called sub-Gaussian  if
    there exists constants $v$ and $C$  such that for any $t>0$, 
    $\P(|X|>t) \le C e^{-vt^2}$ and $X$ has a non-zero variance.
        A random variable $X$ is called centered if $\E X=0$.
\end{definition}

\begin{remark}
	For convenience, we use the term \it{sub-Gaussian matrix} for a matrix with i.i.d sub-Gaussian entries.
\end{remark}
Many non-asymptotic results on a sub-Gaussian matrix distribution have recently
appeared.
A survey of this topic appears in
\cite{rudelson2014recent,vershynin2010introduction}. 

The following facts, proved in
\cite{rudelson2009smallest,rudelson2014recent,vershynin2010introduction,rudelson2008littlewood,litvak2005smallest},
are used in the paper:
\begin{enumerate}
    \item Linear combination of centered sub-Gaussian variables is also sub-Gaussian. This is stated in Theorem \ref{thm:Hoeffding}. The inequality in this theorem is similar to
     Hoeffding inequality \cite{hoeffding1963probability}.
    \item The bound for the first singular value of a sub-Gaussian random matrix is given in Theorem \ref{thm:norm}.
    \item The probability bound for the sum of centered sub-Gaussian variables to be small  is given in Theorem \ref{lem:LPRT}.
\end{enumerate}
Formally,

\begin{theorem}
    \label{thm:Hoeffding}
    Let $X_1 , \ldots, X_n$ be independent centered sub-Gaussian  random variables.
    Then, for any $a_1, \ldots, a_n \in \R$
    $$
    \P \left (\left | \sum_{j=1}^n a_j X_j \right | >t \right )
    \le  2 \exp \left ( - \frac{c t^2}{ \sum_{j=1}^n a_j^2} \right ).
    $$
\end{theorem}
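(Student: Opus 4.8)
The plan is to prove this via the exponential moment (Chernoff) method, the same route used to establish the classical Hoeffding inequality. There are three ingredients: (i) a moment generating function (MGF) bound for a single centered sub-Gaussian variable; (ii) tensorization of this bound across the independent summands; and (iii) Markov's inequality applied to the exponential, followed by optimization of a free scale parameter.

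First I would convert the tail definition into an MGF bound. Starting from $\P(|X_j|>t)\le Ce^{-vt^2}$, integrating the tail against $p\,t^{p-1}$ gives moment estimates of the form $\E|X_j|^p \le (C_1\sqrt{p})^p$ for a constant $C_1$ depending only on $C$ and $v$. Expanding $e^{\lambda X_j}$ as a power series and using that $X_j$ is centered (so the linear term vanishes), these moment bounds sum to yield
$$
\E e^{\lambda X_j} \le e^{K\lambda^2}
$$
for all $\lambda \in \R$, where $K$ is an absolute constant times $\max(C, v^{-1})$. This equivalence between sub-Gaussian tails, polynomial moment growth, and the MGF bound is standard and is recorded, for instance, in the survey \cite{vershynin2010introduction}.

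Next, set $S = \sum_{j=1}^n a_j X_j$. Since the $X_j$ are independent, the MGF factorizes, and applying the single-variable bound to each term $\lambda a_j X_j$ gives
$$
\E e^{\lambda S} = \prod_{j=1}^n \E e^{\lambda a_j X_j} \le \prod_{j=1}^n e^{K \lambda^2 a_j^2} = \exp\left(K\lambda^2 \sum_{j=1}^n a_j^2\right).
$$
For $\lambda > 0$, Markov's inequality then yields $\P(S>t) \le e^{-\lambda t}\,\E e^{\lambda S} \le \exp\left(-\lambda t + K\lambda^2\sum_{j=1}^n a_j^2\right)$. Choosing $\lambda = t/\left(2K\sum_{j=1}^n a_j^2\right)$ to minimize the exponent produces $\P(S>t) \le \exp\left(-t^2/\left(4K\sum_{j=1}^n a_j^2\right)\right)$. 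Applying the identical argument to $-S$ and adding the two estimates gives the two-sided bound with leading factor $2$ and constant $c = 1/(4K)$.

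The main obstacle is step (i): passing from the raw tail definition to a clean MGF bound while correctly exploiting the centering hypothesis. The power-series expansion of $e^{\lambda X_j}$ must be controlled term by term, and it is precisely the vanishing of the first moment that suppresses a linear-in-$\lambda$ contribution which would otherwise destroy the quadratic exponent. Once the bound $\E e^{\lambda X_j}\le e^{K\lambda^2}$ is secured, steps (ii) and (iii) are routine.
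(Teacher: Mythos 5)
Your proof is correct: it is the standard Chernoff/MGF argument (tail bound $\Rightarrow$ moment growth $\Rightarrow$ $\E e^{\lambda X}\le e^{K\lambda^2}$ using centering, then tensorize and optimize $\lambda$), with the only glossed-over step --- the equivalence of the tail and MGF characterizations --- being exactly the standard fact recorded in \cite{vershynin2010introduction}. The paper itself gives no proof of this theorem, citing it as a known fact from \cite{rudelson2009smallest,vershynin2010introduction} and related surveys, and the proofs in those references proceed by precisely the route you describe.
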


\begin{theorem}
    \label{thm:norm}
    Let $\Omega$ be a $k \times n$,~$n \ge k$, random matrix whose entries
    are i.i.d  centered sub-Gaussian random variable. Then,  $
    \P \big( \sigma_1(\Omega) > t \sqrt{n} \big)    \le e^{-c_0 t^2
    n}$ holds for  $t \geq C_0$.
\end{theorem}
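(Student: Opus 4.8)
The plan is to bound the operator norm $\sigma_1(\Omega)=\sup_{x\in S^{n-1}}\norm{\Omega x}$ by a discretization-plus-concentration argument, which is precisely the route for which the two preceding tools---the $\e$-net size bound of Lemma \ref{lem:eps_net_size} and the Hoeffding-type inequality of Theorem \ref{thm:Hoeffding}---were prepared. First I would rewrite the largest singular value as a bilinear supremum, $\sigma_1(\Omega)=\sup_{x\in S^{n-1}}\sup_{y\in S^{k-1}}\pr{\Omega x}{y}$, so that the quantity inside is \emph{linear} in the entries of $\Omega$. For a fixed pair $(x,y)$ we have $\pr{\Omega x}{y}=\sum_{i,j}y_i x_j\,\Omega_{ij}$, a linear combination of the i.i.d.\ centered sub-Gaussian entries with coefficients $a_{ij}=y_i x_j$. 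Since $\sum_{i,j}a_{ij}^2=\norm{x}^2\norm{y}^2=1$, Theorem \ref{thm:Hoeffding} yields the clean pointwise tail
$$
\P\big(|\pr{\Omega x}{y}|>s\big)\le 2e^{-c s^2},
$$
with $c$ the absolute constant from that theorem and, crucially, no dependence on $(x,y)$.

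Next I would replace both spheres by $1/2$-nets. Let $\NN_x$ be a $1/2$-net of $S^{n-1}$ and $\NN_y$ a $1/2$-net of $S^{k-1}$; by the remark following Lemma \ref{lem:eps_net_size} these satisfy $|\NN_x|\le 6^n$ and $|\NN_y|\le 6^{k}\le 6^n$, using $k\le n$. A standard net-approximation estimate then upgrades the discrete maximum to the full supremum at the cost of a constant factor. Applying the $1/2$-net bound once on the $x$ side, $\sigma_1(\Omega)\le 2\max_{x\in\NN_x}\norm{\Omega x}$, and once more on the $y$ side to each vector $\Omega x$, $\norm{\Omega x}\le 2\max_{y\in\NN_y}\pr{\Omega x}{y}$, one obtains
$$
\sigma_1(\Omega)\le 4\max_{x\in\NN_x,\;y\in\NN_y}|\pr{\Omega x}{y}|.
$$

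Finally I would take a union bound over the at most $|\NN_x|\,|\NN_y|\le 6^{2n}$ net pairs. Since $\{\sigma_1(\Omega)>t\sqrt n\}$ forces the discrete maximum to exceed $t\sqrt n/4$, applying the pointwise tail with $s=t\sqrt n/4$ gives
$$
\P\big(\sigma_1(\Omega)>t\sqrt n\big)\le 2\cdot 6^{2n}\exp\!\Big(-\tfrac{c}{16}t^2 n\Big)=2\exp\!\Big((2\ln 6)\,n-\tfrac{c}{16}t^2 n\Big).
$$
The main (indeed the only) obstacle is absorbing the entropy factor $6^{2n}$ into the exponential, and this is exactly why the statement is restricted to $t\ge C_0$. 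I would choose $C_0$ large enough that for all $t\ge C_0$ the quadratic term dominates the linear entropy term, say $\tfrac{c}{16}t^2-2\ln 6\ge \tfrac{c}{32}t^2$, and large enough that the leading constant $2$ is swallowed as well (e.g.\ $2\le \exp(\tfrac{c}{64}t^2 n)$ for $n\ge 1$). This produces $\P(\sigma_1(\Omega)>t\sqrt n)\le e^{-c_0 t^2 n}$ with $c_0=c/64$ for every $t\ge C_0$, which is the claimed bound. Everything else is routine bookkeeping of the constants inherited from Theorem \ref{thm:Hoeffding} and the net size.
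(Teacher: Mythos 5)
Your proposal is correct and follows exactly the argument the paper relies on: the paper states Theorem \ref{thm:norm} as a known fact from the cited literature, and its own proof of the analogous Theorem \ref{thm:max_on_subspase} (which it explicitly describes as ``similar to the proof of Theorem \ref{thm:norm}'') is the same two-net discretization of $\sup_{x,y}\langle\Omega x,y\rangle$, the Hoeffding-type tail of Theorem \ref{thm:Hoeffding} for each fixed net pair, and a union bound absorbed into the exponent for $t\ge C_0$. No gaps; the constant bookkeeping is handled appropriately.
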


Since we are interested in sparse matrices, the following definition is useful.
\begin{definition} \label{def:sub_gauss_X}
	A sub-Gaussian random variables $X$ is represented by a combination of a centered sub-Gaussian random variable $\frac{1}{\sqrt{p}}Z$ with $\P(Z=0)=0,~ \E(Z^2) = 1$ with probability $p$ and $0$ otherwise. Note that $\E(X) = 0, \E(X^2) = 1, \E(X^3) = \frac{\E(Z^3)}{\sqrt{p}}$ and $\E(X^4) = \frac{\E(Z^4)}{p}$.
\end{definition} 

\begin{lemma}
	\label{lem:sum_sub_gaussian_moments}
	Let $X_1 , \ldots, X_n$ be independent centered sub-Gaussian random variables defined as a combination of a centered sub-Gaussian $\frac{1}{\sqrt{p}}Z$ with $\P(Z=0)=0$ and $E(Z^2) = 1$ with probability $p$ and $0$ otherwise.
	Then, for any $(a_1 \ldots a_n) \in S^{n-1}$ the third and forth moment (skewness and kortosis) of $(\sum\limits_{i=1}^{n} a_i X_i)$ are bounded by
	$$
	\E\left((\sum\limits_{i=1}^{n} a_i X_i)^3\right) \leq \frac{\E(Z^3)}{\sqrt{p}}
	$$
	and
	$$
	\E\left((\sum\limits_{i=1}^{n} a_i X_i)^4\right) \leq \frac{\E(Z^4)+1}{p} \newdef \frac{z_4}{p}.
	$$
\end{lemma}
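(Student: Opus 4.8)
The plan is to expand the moments of $S\newdef\sum_{i=1}^n a_iX_i$ directly by multilinearity and then discard the vanishing terms using independence together with the centering $\E(X_i)=0$. Writing $\E(S^m)=\sum a_{i_1}\cdots a_{i_m}\,\E(X_{i_1}\cdots X_{i_m})$, independence factorizes each mixed expectation over the distinct indices appearing in the product, and any index occurring exactly once contributes a factor $\E(X_i)=0$. Hence for $m=3$ the only surviving tuples are the fully diagonal ones $i_1=i_2=i_3$, and for $m=4$ the survivors are the diagonal tuples together with the ``two equal pairs'' tuples, since the remaining multiplicity patterns $(3,1)$, $(2,1,1)$ and $(1,1,1,1)$ all contain a multiplicity-one index and therefore vanish.

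For the third moment this leaves $\E(S^3)=\sum_i a_i^3\,\E(X_i^3)=\frac{\E(Z^3)}{\sqrt p}\sum_i a_i^3$, using $\E(X_i^3)=\E(Z^3)/\sqrt p$ from Definition \ref{def:sub_gauss_X}. Because $(a_1,\dots,a_n)\in S^{n-1}$ we have $\sum_i|a_i|^3=\|a\|_3^3\le\|a\|_2^3=1$ by monotonicity of the $\ell_p$-norms, so $\sum_i a_i^3\le1$ and the claimed bound $\E(S^3)\le\E(Z^3)/\sqrt p$ follows at once.

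For the fourth moment the diagonal terms give $\frac{\E(Z^4)}{p}\sum_i a_i^4$ and the paired terms give $3\sum_{i\ne j}a_i^2a_j^2$, the combinatorial factor $3$ counting the ways to split four positions into two pairs, each pair contributing $\E(X_i^2)=1$. This yields
$$\E(S^4)=\frac{\E(Z^4)}{p}\sum_i a_i^4+3\sum_{i\ne j}a_i^2a_j^2.$$
Again $\sum_i a_i^4=\|a\|_4^4\le\|a\|_2^4=1$, while $\sum_{i\ne j}a_i^2a_j^2=\bigl(\sum_i a_i^2\bigr)^2-\sum_i a_i^4\le1$, so already $\E(S^4)\le\frac{\E(Z^4)}{p}+3$.

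The step needing the most care is converting the residual constant $3$ coming from the cross term into the advertised $\frac{z_4}{p}=\frac{\E(Z^4)+1}{p}$. In the sparse regime the paper targets ($p$ small, so $3\le 1/p$, i.e. $p\le1/3$) one simply bounds $3\le1/p$ and combines $\frac{\E(Z^4)}{p}+3\le\frac{\E(Z^4)+1}{p}$. Equivalently, setting $Q\newdef\sum_i a_i^4$, the inequality to verify reduces to $(1-Q)\bigl(3p-\E(Z^4)\bigr)\le1$, which is immediate once $3p\le\E(Z^4)$; and this holds because $\E(Z^4)\ge(\E Z^2)^2=1$ together with $p\le1/3$. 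This is the only place where the smallness of $p$ is genuinely used, the rest being pure multilinear expansion plus the two $\ell_p$-norm comparisons.
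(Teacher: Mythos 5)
Your proof follows the same route as the paper's: expand $\E(S^m)$ by multilinearity, kill every tuple containing a multiplicity-one index via independence and $\E(X_i)=0$, and bound the surviving diagonal and paired sums by $1$ using $\|a\|_2=1$. The one place you diverge is also the one place you are more careful than the paper. The paper writes the fourth-moment cross term as $\sum_{i\neq j}a_i^2a_j^2$ with no combinatorial factor, concludes $\E(S^4)\le\E(X^4)+1=\frac{\E(Z^4)+p}{p}$, and closes with $p\le1$. The correct coefficient for the $(2,2)$ multiplicity pattern is $3$ (six orderings per unordered pair $\{i,j\}$), exactly as you have it, and with that factor the target bound $\frac{\E(Z^4)+1}{p}$ genuinely requires a smallness condition on $p$: your reduction to $(1-Q)\bigl(3p-\E(Z^4)\bigr)\le1$ together with $\E(Z^4)\ge(\E Z^2)^2=1$ and $p\le1/3$ is sound, and some such restriction is unavoidable, since for Rademacher $Z$, $p=1$ and $a_i=1/\sqrt n$ one computes $\E(S^4)=3-2/n>2=\frac{\E(Z^4)+1}{p}$, so the lemma as literally stated fails for large $p$. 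Since the paper only invokes this lemma in the sparse regime, the restriction $p\le1/3$ is harmless, but it should be made explicit; your write-up does this and in that respect improves on the paper's own argument. (One small shared caveat: the third-moment step $\sum_i a_i^3\,\E(X_i^3)\le\E(Z^3)/\sqrt p$ implicitly uses $\E(Z^3)\ge0$ in addition to $\sum_i a_i^3\le\|a\|_3^3\le1$; both you and the paper assume this silently.)
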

\begin{proof}
	$$
		\E\left((\sum\limits_{i=1}^{n} a_i X_i)^3\right) = \sum\limits_{i=1}^{n} a_i^3 \E(X_i^3) + \sum\limits_{i,j=1, i \neq j}^{n} a_i^2 a_j \E(X_i^2) \E(X_j) \leq \E(X^3) = \frac{\E(Z^3)}{\sqrt{p}}.
	$$
	$$
		\E\left((\sum\limits_{i=1}^{n} a_i X_i)^4\right) = \sum\limits_{i=1}^{n} a_i^4 \E(X_i^4) + \sum\limits_{i,j=1, i \neq j}^{n} a_i^2 a_j^2 \leq \E(X^4)+1 = \frac{\E(Z^4)+p}{p}.
	$$
	Since $p \leq 1$, the proof is completed.
\end{proof}

\begin{lemma} 
	\label{lem:LPRT}
	Let $X_1 , \ldots, X_n$ be an i.i.d
	centered sub-Gaussian random variable as in Definition \ref{def:sub_gauss_X}. For every coefficients vector (in particular for a compressible vector) $a= (a_1,\ldots,a_n) \in
	S^{n-1}$,
	the random sum $S = \sum_{i=1}^n a_i X_i$ satisfies
	$\P(|S| < \lambda) \le 1- p \frac{(1-\l^2)^2}{z_4} $.
\end{lemma}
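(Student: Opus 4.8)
The plan is to bound the probability that a small-ball event $\{|S| < \lambda\}$ occurs by using the second and fourth moments of $S$, via the Paley–Zygmund inequality. Concretely, I would first observe that by Lemma \ref{lem:sum_sub_gaussian_moments} we control the fourth moment, $\E(S^4) \le z_4/p$, and that by independence and $\E(X_i)=0$, $\E(X_i^2)=1$ together with $a \in S^{n-1}$ we have $\E(S^2) = \sum_i a_i^2 \E(X_i^2) = 1$. The heart of the argument is to convert these moment bounds into a lower bound on $\P(|S| \ge \lambda)$, which is equivalent to the stated upper bound $\P(|S| < \lambda) \le 1 - p(1-\lambda^2)^2/z_4$.

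The key computational step is the Paley–Zygmund-type estimate applied to the nonnegative random variable $S^2$. Since $\E(S^2) = 1$, I would write, for $0 \le \lambda^2 \le 1$,
\begin{equation}
\E(S^2) = \E\!\left(S^2 \mathbbm{1}_{\{S^2 < \lambda^2\}}\right) + \E\!\left(S^2 \mathbbm{1}_{\{S^2 \ge \lambda^2\}}\right) \le \lambda^2 + \E\!\left(S^2 \mathbbm{1}_{\{S^2 \ge \lambda^2\}}\right).
\end{equation}
Then I would apply the Cauchy–Schwarz inequality to the second term to bring in the fourth moment:
\begin{equation}
\E\!\left(S^2 \mathbbm{1}_{\{S^2 \ge \lambda^2\}}\right) \le \sqrt{\E(S^4)}\,\sqrt{\P(S^2 \ge \lambda^2)} \le \sqrt{z_4/p}\,\sqrt{\P(|S| \ge \lambda)}.
\end{equation}
Combining the two displays and using $\E(S^2) = 1$ yields $1 - \lambda^2 \le \sqrt{z_4/p}\,\sqrt{\P(|S|\ge\lambda)}$, which on squaring and rearranging gives $\P(|S| \ge \lambda) \ge p(1-\lambda^2)^2/z_4$. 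Passing to the complement then produces exactly $\P(|S| < \lambda) \le 1 - p(1-\lambda^2)^2/z_4$.

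I would take care to justify the two moment identities cleanly: the second-moment normalization $\E(S^2)=1$ follows from expanding the square, using independence to kill the cross terms (since $\E(X_j)=0$), and $\sum_i a_i^2 = 1$; and the fourth-moment bound is supplied directly by Lemma \ref{lem:sum_sub_gaussian_moments}. The main obstacle, and the only place where one must be slightly careful, is the direction of the inequalities and the restriction $\lambda \le 1$ needed so that $1-\lambda^2 \ge 0$ before squaring (otherwise the bound is vacuous, which is fine since the right-hand side already exceeds or equals the trivial value). I do not expect any genuine difficulty beyond keeping the Cauchy–Schwarz step and the sign conventions straight; the statement's emphasis that this holds \emph{for every} coefficient vector, including compressible ones, is automatic here because the argument uses only $a \in S^{n-1}$ and never any incompressibility assumption.
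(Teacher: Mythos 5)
Your proposal is correct and follows essentially the same route as the paper: the same decomposition of $\E(S^2)$ over the events $\{|S|<\lambda\}$ and $\{|S|\ge\lambda\}$, the same Cauchy--Schwarz step yielding the Paley--Zygmund bound $\P(|S|\ge\lambda)\ge(1-\lambda^2)^2/\E(S^4)$, and the same use of Lemma \ref{lem:sum_sub_gaussian_moments} to bound $\E(S^4)\le z_4/p$. Your added care about the restriction $\lambda\le 1$ and the direction of the inequalities is consistent with the paper's hypothesis $0<\lambda<(\E S^2)^{1/2}=1$.
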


\begin{proof}
	Let $0< \l < (\E S^2)^{1/2}=1$. By the Cauchy--Schwarz inequality,
	$$
	\E S^2 = \E S^2 \mathbf{1}_{[-\l,\l]}(S)+ \E S^2 \mathbf{1}_{\R \setminus [-\l,\l]}(S)
	\le \l^2+ \left(\E S^4 \right)^{1/2} \P (|S| > \l)^{1/2}.
	$$
	This leads to the  Paley--Zygmund inequality:
	$$
	\P (|S| > \l) \ge \frac{(\E S^2 - \l^2)^2}{\E S^4}
	= \frac{(1-\l^2)^2}{\E S^4}.
	$$
	By Theorem \ref{thm:Hoeffding}, the random variable $S$ is sub-Gaussian.
	By Lemma \ref{lem:sum_sub_gaussian_moments}, $ \E S^4 \leq \frac{z_4}{p}$ where $z_4 = \E Z^4 + 1$.
	To complete the proof 	
	$$
	\P (|S| < \l) \le 1- \frac{(1-\l^2)^2}{\E S^4} = 1-p\frac{(1-\l^2)^2}{z_4}.
	$$
	In particular, for $\l = 1/2$ we have 	$\P (|S| <  1/2) \le 1-z_4'p$ for $z_4' = \frac{9}{16z_4}$

\end{proof}

\begin{lemma}\label{lem:k_choose_alpha_k}
	For any $0<\alpha<1$, there is $c_s$ such that for any k, ${k \choose \alpha k} < c_s^{\alpha k \ln (\frac{1}{\alpha}-1)} < c_s^{\alpha k \ln \frac{1}{\alpha}}$.
\end{lemma}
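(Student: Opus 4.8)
The plan is to reduce everything to a standard closed-form upper bound on the binomial coefficient and then observe that, for a fixed $\alpha$, both that bound and the target exponent grow linearly in $k$, so their ratio is a constant that can be absorbed into the definition of $c_s$. Concretely, I would start from the elementary estimate ${k \choose \alpha k} \le \left(\frac{ek}{\alpha k}\right)^{\alpha k} = \left(\frac{e}{\alpha}\right)^{\alpha k}$, which follows from ${k \choose j} \le k^j/j! \le (ek/j)^j$; alternatively one may use the sharper entropy bound ${k \choose \alpha k} \le e^{kH(\alpha)}$ with $H(\alpha) = -\alpha\ln\alpha-(1-\alpha)\ln(1-\alpha)$, obtained by expanding $1=(\alpha+(1-\alpha))^k$ by the binomial theorem and retaining the single term ${k \choose \alpha k}\alpha^{\alpha k}(1-\alpha)^{(1-\alpha)k}$. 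The useful feature of either bound is that its logarithm is \emph{exactly} linear in $k$, so no Stirling error term has to be tracked.

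Taking logarithms of the first estimate, I want to show $\ln {k \choose \alpha k} \le \alpha k \ln\!\left(\tfrac{1}{\alpha}-1\right)\ln c_s$. Since $\ln {k \choose \alpha k} \le \alpha k\left(1+\ln\tfrac1\alpha\right)$, it suffices to pick $c_s$ with $\ln c_s \ge \frac{1+\ln(1/\alpha)}{\ln(1/\alpha-1)}$ (or $\ln c_s \ge \frac{H(\alpha)}{\alpha\ln(1/\alpha-1)}$ if one uses the entropy bound). The right-hand side is a finite number once $\alpha$ is fixed, which is exactly what the quantifier order ``for any $\alpha$, there is $c_s$'' permits; setting $c_s$ equal to the exponential of that number establishes the first inequality simultaneously for every $k$. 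When $\alpha k$ is not an integer one reads ${k \choose \alpha k}$ as ${k \choose \lfloor\alpha k\rfloor}$, which changes the estimate only by a harmless factor and does not affect the argument.

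For the second inequality I would argue by monotonicity: since $\tfrac1\alpha-1<\tfrac1\alpha$ we have $\ln(\tfrac1\alpha-1)<\ln\tfrac1\alpha$, hence $\alpha k\ln(\tfrac1\alpha-1)<\alpha k\ln\tfrac1\alpha$; because the chosen $c_s$ satisfies $c_s>1$, the map $x\mapsto c_s^{x}$ is increasing and the inequality between the exponents transfers to the claimed inequality between the two powers of $c_s$.

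The one genuine subtlety, and the place where I would be careful, is the positivity of $\ln(\tfrac1\alpha-1)$, which holds precisely when $\alpha<\tfrac12$. This is the regime relevant here, since the lemma is applied to small sparsity fractions, and it is essential: as $\alpha\to\tfrac12$ the denominator $\alpha\ln(\tfrac1\alpha-1)\to0$ while $\ln {k \choose \alpha k}$ stays of order $k$, so no single $c_s$ could work uniformly in $\alpha$ near $\tfrac12$. This both confirms that $c_s$ must be allowed to depend on $\alpha$, consistent with the stated quantifier order, and guarantees $c_s>1$ (numerator and denominator are then both positive), which is exactly what the monotonicity step above needs.
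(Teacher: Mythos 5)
Your proof is correct and takes a genuinely different route from the paper's. The paper estimates $\ln{k \choose \alpha k}$ via Stirling's formula, massages the expression into $\alpha k \ln(\tfrac{1}{\alpha}-1) - k\ln(1-\alpha) + \mathcal{O}(\ln k)$, and then concludes with an asymptotic equivalence ``$\sim \alpha k \ln(\tfrac{1}{\alpha}-1)$''; strictly read, that only gives the claimed bound for $k$ large, and the constant absorbing the $-k\ln(1-\alpha)$ term and the Stirling error is never pinned down. You instead use the exact, all-$k$ inequality ${k \choose j} \le (ek/j)^j$ (or the entropy bound), whose logarithm is exactly linear in $k$, and then choose $\ln c_s \ge \bigl(1+\ln\tfrac{1}{\alpha}\bigr)/\ln\bigl(\tfrac{1}{\alpha}-1\bigr)$; this yields a uniform $c_s$ for every $k$ with no error terms to track, which is cleaner and, arguably, the more complete argument. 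Your observation about positivity of $\ln(\tfrac{1}{\alpha}-1)$ is also a genuine catch: as stated for all $0<\alpha<1$ the lemma fails (at $\alpha=\tfrac12$ the right-hand side is $1$ while ${k \choose k/2}$ grows, and for $\alpha>\tfrac12$ no choice of $c_s$ makes both inequalities hold), so the restriction to $\alpha<\tfrac12$ that you make explicit is in fact necessary; the paper leaves this implicit, relying on the lemma only being invoked with $\alpha = 16C^2\eta^2$ small. The one cosmetic difference is that your $c_s$ satisfies $c_s>1$ and dominates the binomial coefficient directly, whereas the paper's derivation suggests $c_s$ is meant to capture the asymptotic growth rate; either reading supports the way the lemma is used in Lemma 3.3.
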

\begin{proof}
	We use the Stirling formula to estimate $\ln  {k \choose \alpha k}$.
	\begin{align*}
	\ln  {k \choose \alpha k} & =  k \ln k - k - (\alpha k \ln (\alpha k) - \alpha k) - ((k- \alpha k) \ln (k- \alpha k) - (k- \alpha k) ) + \mathcal{O}(\ln k)\\
	& = k \ln k - \alpha k \ln (\alpha k) - k \ln(k- \alpha k) + \alpha k \ln(k-\alpha k) + \mathcal{O}(\ln k) \\
	& = k \ln k - \alpha k \ln \alpha k - k \ln k(1-\alpha) + \alpha k \ln (\alpha k (\frac{1}{\alpha} - 1)) + \mathcal{O}(\ln k)\\
	& = \alpha k \ln (\frac{1}{\alpha} - 1) - k \ln (1- \alpha) + \mathcal{O}(\ln k)\\ 
	& \sim \alpha k \ln   (\frac{1}{\alpha} - 1) - k(-\alpha - \frac{\alpha^2}{2} - \cdots)  \sim  \alpha k \ln (\frac{1}{\alpha}-1).
	\end{align*}
	
\end{proof}

Lemma \ref{lem:berry_essen_var} follows from Berry-Essen's theorem \cite{berry1941accuracy,esseen1942liapounoff} in a similar fashion to the derivations in \cite{tao2012topics}.
\begin{lemma} \label{lem:berry_essen_var}
	For $S = \sum_{i=1}^n a_i X_i$ where $X_i$ are i.i.d random variables with $E(X) = 0, E(X^2) = 1$ and $\sum\limits_{i=1}^n a_i^2 = 1$, then for all $r$
	\begin{equation}
	\begin{split}
	\sup\limits_t \P\left(|\sum\limits_{i=1}^n a_i X_i-t|  < r \right)  
	\leq
	\frac{r}{\sqrt{\sum\limits_{i=1}^n a_i^2}} + 2C_{BE} \frac{\E(X^3) \sum_{i=1}^n |a_i|^3}{\left(\sum\limits_{i=1}^n a_i^2 \right)^{3/2}}
	\end{split}
	\end{equation}
	holds.
\end{lemma}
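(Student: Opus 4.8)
The plan is to invoke the Berry--Esseen theorem for sums of \emph{independent but not necessarily identically distributed} summands and then convert the resulting uniform closeness of distribution functions into the stated bound on the concentration function. Put $Y_i = a_i X_i$ and $S = \sum_{i=1}^n Y_i$. Since the $X_i$ are independent with $\E X_i = 0$ and $\E X_i^2 = 1$, the $Y_i$ are independent with $\E Y_i = 0$, $\mathrm{Var}(Y_i) = a_i^2$ and $\E|Y_i|^3 = |a_i|^3\,\E|X|^3$, so that $B^2 \newdef \sum_{i=1}^n a_i^2$ is the variance of $S$. The non-identically-distributed form of Berry--Esseen then supplies an absolute constant $C_{BE}$ with
$$
\sup_x \left| \P\!\left( \frac{S}{B} \le x \right) - \Phi(x) \right|
\le C_{BE}\,\frac{\sum_{i=1}^n \E|Y_i|^3}{B^3}
= C_{BE}\,\frac{\E(X^3)\sum_{i=1}^n |a_i|^3}{\left(\sum_{i=1}^n a_i^2\right)^{3/2}} \newdef \delta ,
$$
where $\Phi$ is the standard normal distribution function; this is precisely the step carried out as in \cite{tao2012topics}.

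Next I would rewrite the concentration function as a difference of distribution-function values and estimate each term by $\Phi$. For any $t$, writing $F_S$ for the distribution function of $S$,
$$
\P\big(|S-t| < r\big) \le F_S(t+r) - F_S(t-r)
= \P\!\left(\frac{S}{B} \le \frac{t+r}{B}\right) - \P\!\left(\frac{S}{B} \le \frac{t-r}{B}\right).
$$
Applying the Berry--Esseen estimate at the two endpoints gives $F_S(t+r) \le \Phi\big((t+r)/B\big) + \delta$ and $F_S(t-r) \ge \Phi\big((t-r)/B\big) - \delta$, whence
$$
\P\big(|S-t| < r\big) \le \Phi\!\left(\frac{t+r}{B}\right) - \Phi\!\left(\frac{t-r}{B}\right) + 2\delta .
$$

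Finally, I would bound the Gaussian increment by the interval length times the maximal density. Since the standard normal density does not exceed $1/\sqrt{2\pi} \le 1/2$, the increment over an interval of length $2r/B$ is at most $\tfrac12 \cdot 2r/B = r/B$, uniformly in $t$, so
$$
\sup_t \P\big(|S-t| < r\big) \le \frac{r}{B} + 2\delta
= \frac{r}{\sqrt{\sum_{i=1}^n a_i^2}} + 2C_{BE}\,\frac{\E(X^3)\sum_{i=1}^n |a_i|^3}{\left(\sum_{i=1}^n a_i^2\right)^{3/2}} ,
$$
which is the assertion. The step demanding the most care is the appeal to the non-i.i.d.\ version of Berry--Esseen for the scaled variables $Y_i = a_i X_i$ rather than the textbook i.i.d.\ statement, together with the bookkeeping of absolute versus signed third moments (the theorem genuinely uses $\sum_i \E|Y_i|^3$); the factor $2$ in front of $C_{BE}$ is exactly the price of applying the distribution-function estimate at both endpoints.
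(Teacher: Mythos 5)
Your proposal is correct and follows essentially the same route as the paper's own proof: apply the non-identically-distributed Berry--Esseen theorem to the normalized sum, pay the constant $2C_{BE}(\cdot)$ for evaluating the distribution function at the two endpoints $t\pm r$, and bound the resulting Gaussian increment by $r/\sqrt{\sum_i a_i^2}$ (the paper does this via $\P(|N-t|<s)\le\P(|N|<s)<s$, you via the maximal density, which is the same computation). Your explicit remark that Berry--Esseen genuinely requires the absolute third moments $\E|X|^3$ rather than $\E(X^3)$ is a fair point of care that the paper's statement glosses over.
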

\begin{proof}
	Let $N$ be a standard normal variable. From Barry-Essen's theorem follows that for all $r$
	$$
	\left| \P\left(\frac{\sum_{i=1}^n a_i X_i}{\sqrt{\sum_{i=1}^n a_i^2}} < \frac{r}{\sqrt{\sum_{i=1}^n a_i^2}}\right) - \P\left( N<\frac{r}{\sqrt{\sum_{i=1}^n a_i^2}}\right) \right| \leq C_{BE} \frac{\E(X^3) \sum_{i=1}^n |a_i|^3}{\left(\sum_{i=1}^n a_i^2 \right)^{2/3}}.
	$$
	Thus, for any $t$,
	
	\begin{multline} \label{eq:barry_essen_difference}
		\left| \P\left(\frac{|\sum_{i=1}^n a_i X_i-t|}{\sqrt{\sum_{i=1}^n a_i^2}}  < \frac{r}{\sqrt{\sum_{i=1}^n a_i^2}}\right)  - \P\left( |N-t|<\frac{r}{\sqrt{\sum_{i=1}^n a_i^2}}\right) \right| 
		\\
		\begin{aligned}
			&=
			\left| 
			\P\left(\frac{\sum_{i=1}^n a_i X_i}{\sqrt{\sum_{i=1}^n a_i^2}} < \frac{t+r}{\sqrt{\sum_{i=1}^n a_i^2}}\right) - 
			\P\left( N<\frac{t+r}{\sqrt{\sum_{i=1}^n a_i^2}}\right)  - \right. \\  
			&\qquad\qquad \qquad\qquad
			\left. \P\left(\frac{\sum_{i=1}^n a_i X_i}{\sqrt{\sum_{i=1}^n a_i^2}} < \frac{t-r}{\sqrt{\sum_{i=1}^n a_i^2}}\right) - 
			P\left( N<\frac{t-r}{\sqrt{\sum_{i=1}^n a_i^2}}\right) 
			\right| 
			\\
			&\leq 2C_{BE} \frac{\E(X^3) \sum_{i=1}^n |a_i|^3}{\left(\sum_{i=1}^n a_i^2 \right)^{3/2}}.
		\end{aligned}
	\end{multline}
	
	By rewriting \ref{eq:barry_essen_difference}, we have
	\begin{equation}
	\begin{split}
	\P\left(|\sum_{i=1}^n a_i X_i-t|  < r \right)  
	\leq
	\P\left( |N-t|<\frac{r}{\sqrt{\sum_{i=1}^n a_i^2}}\right) + 2C_{BE} \frac{\E(X^3) \sum_{i=1}^n |a_i|^3}{\left(\sum_{i=1}^n a_i^2 \right)^{3/2}}.
	\end{split}
	\end{equation}
	For any $t$, $\P\left( |N-t|<\frac{r}{\sqrt{\sum_{i=1}^n a_i^2}}\right) \leq \P\left( |N|<\frac{r}{\sqrt{\sum_{i=1}^n a_i^2}}\right) < \frac{r}{\sqrt{\sum_{i=1}^n a_i^2}} $.
\end{proof}

\section{Metric conservation of sub-Gaussian random matrices }\label{sec:rectangular}

The main goal of this section is to show that for any matrix $A$ and for a sub-Gaussian matrix $\Omega$, the image of $A\Omega$ is
``close'' to the image of $A$ with high probability, or, in other
words, $\Omega$ preserves the  geometry. Namely, if $Q$ is an
orthogonal basis for $A\Omega$, then $\|A-QQ^*A\|_2$ is
small.

In order to show that the application of a random sub-Gaussian
matrix  preserves the geometry of $A$, we have to bound its behavior
in any subspace of a given dimension $r$. We show in Theorem \ref{thm:max_on_subspase} that
the norm of a random sub-Gaussian matrix in a subspace of dimension $r$ is
bounded from above with high probability. In Lemmas \ref{lem:max_val_one_vec_incomp} and \ref{lem:max_val_one_vec} it is shown that $\Omega$ conserves compressible and incompressible vectors, respectively, from a subspace of dimension $r$. In Theorem
\ref{thm:rectangular_small_val_subspace}, these results are joined to show that the minimal
singular value is bounded from below with high probability. The flow of the proof is based on ideas from the proof of bounds on singular values of Bernoulli random matrix in \cite{tao2012topics} and ideas from \cite{rudelson2014recent}. In Theorem \ref{thm:A_QQA}, these results and the fact that the norm of
a random matrix is also bounded (Theorem \ref{thm:norm}) are used to
show that a sub-Gaussian matrix preserves the geometry.

These are the dependencies among the different theorems in this section:
\tikzstyle{line} = [draw, -latex']
\tikzstyle{block} = [rectangle,text width=1cm, text centered]
\begin{center}
\begin{tikzpicture}
\node [block] (T41) {T \ref{thm:A_QQA}};

\node [block, above right =0.2cm and 0.8cm of T41] (T24) {T \ref{thm:norm}};
\node [block, right =0.8cm of T41] (T36) {T \ref{thm:rectangular_small_val_subspace}};

\node [block, right =0.8cm of T36] (T33) {L \ref{lem:max_val_one_vec}};
\node [block, above =0.2cm of T33] (T34) {T \ref{thm:max_on_subspase}};
\node [block, below =0.2cm of T33] (T32) {L \ref{lem:max_val_one_vec_incomp}};

\node [block, right =0.8cm of T32] (L31) {L \ref{lem:one_row_incompressible_bound}};

\path [line] (T36) -- (T41);
\path [line] (T24) -- (T41);

\path [line] (T33) -- (T36);
\path [line] (T34) -- (T36);
\path [line] (T32) -- (T36);

\path [line] (L31) -- (T32);

\end{tikzpicture}	
\end{center}

\begin{lemma} \label{lem:one_row_incompressible_bound}
	Let $X_1, \ldots, X_n$ be i.i.d centered sub-Gaussian random variables as in Definition \ref{def:sub_gauss_X}. Denote $\e_c = \e_0 \eta \sqrt{p}$. For any $(\eta,\e_c)$-incompressible $a= (a_1,\ldots,a_n) \in S^{n-1}$, the random sum $\sum_{i=1}^n a_i X_i$ satisfies 
	$$
	P(|\sum_{i=1}^n a_i X_i| \leq2 C \e_0 \eta) \leq 2C_1(C) \e_0 \E(Z^3) 
	$$
	where $C$ is a constant that will be chosen later, and $C_1(C)$ depends only on $C$.
\end{lemma}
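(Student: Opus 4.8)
The plan is to reduce the small-ball estimate for $S = \sum_{i=1}^n a_i X_i$ to the Berry--Esseen bound of Lemma \ref{lem:berry_essen_var}, but applied only to the ``spread'' part of the coefficient vector. The reason one cannot invoke Lemma \ref{lem:berry_essen_var} directly on the full sum is that the third-moment term $\sum_{i=1}^n |a_i|^3$ it produces can be of order $1$ whenever $a$ carries a few large coordinates; incompressibility does not forbid such coordinates, it only guarantees enough $\ell^2$-mass on the small ones. So the first thing I would do is isolate the small coordinates.

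First I would set $J = \{i : |a_i| \leq \e_c\}$ and $\sigma^2 = \sum_{i \in J} a_i^2$. By the definition of $(\eta,\e_c)$-incompressibility, $\sigma^2 \geq \eta^2$. Splitting $S = S_J + S_{J^c}$ with $S_J = \sum_{i \in J} a_i X_i$ and $S_{J^c} = \sum_{i \notin J} a_i X_i$, I would condition on $\{X_i : i \notin J\}$, which freezes $S_{J^c}$ to a constant. Since $S_J$ and $S_{J^c}$ are independent, this gives $\P(|S| < r) \leq \sup_{s \in \R}\P(|S_J - s| < r)$, absorbing the large-coordinate block harmlessly into the supremum over shifts.

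Next I would normalize: set $b_i = a_i/\sigma$ for $i \in J$, so that $(b_i)_{i \in J} \in S^{|J|-1}$ and $S_J = \sigma\sum_{i \in J} b_i X_i$. Applying Lemma \ref{lem:berry_essen_var} to $\sum_{i \in J} b_i X_i$ (whose squared coefficients sum to $1$) yields
$$
\sup_{s}\P(|S_J - s| < r) \leq \frac{r}{\sigma} + 2C_{BE}\,\E(X^3)\,\frac{1}{\sigma^3}\sum_{i \in J}|a_i|^3.
$$
The decisive gain comes from the restriction: since $|a_i| \leq \e_c$ on $J$, we have $\sum_{i \in J}|a_i|^3 \leq \e_c\sum_{i \in J}a_i^2 = \e_c\sigma^2$, so the third-moment term is at most $2C_{BE}\,\E(X^3)\,\e_c/\sigma$. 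I would then substitute the parameter values. Using $\sigma \geq \eta$, $r = 2C\e_0\eta$, $\e_c = \e_0\eta\sqrt{p}$, and $\E(X^3) = \E(Z^3)/\sqrt{p}$ from Definition \ref{def:sub_gauss_X}, the first term obeys $r/\sigma \leq 2C\e_0$ and the second obeys $2C_{BE}\,\E(X^3)\,\e_c/\sigma \leq 2C_{BE}\,\E(Z^3)\,\e_0$; the factors of $\sqrt{p}$ cancel exactly, which is precisely what the scaling $\e_c = \e_0\eta\sqrt{p}$ is calibrated to achieve. Adding the two and using $\E(Z^3) \geq (\E Z^2)^{3/2} = 1$ to absorb the stray constant $C$ into $C_1(C) = C + C_{BE}$ gives $\P(|S| < 2C\e_0\eta) \leq 2C_1(C)\,\e_0\,\E(Z^3)$, as claimed.

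I expect the main obstacle to be the conditioning-plus-normalization step: one has to recognize that Lemma \ref{lem:berry_essen_var} must be applied only to the small-coordinate block $J$, so that the absolute third moment is controlled by $\e_c$, while the large-coordinate block is disposed of through the supremum over shifts and the lower bound $\sigma \geq \eta$ supplied by incompressibility. Everything after that is bookkeeping, and the clean cancellation of $\sqrt{p}$ serves as a consistency check that the definition $\e_c = \e_0\eta\sqrt{p}$ is exactly the right choice for the sparse sub-Gaussian moments of Definition \ref{def:sub_gauss_X}.
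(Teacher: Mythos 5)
Your proposal is correct and follows essentially the same route as the paper's own proof: condition on the large coordinates to reduce to a shifted small-ball estimate for the block $\{i:|a_i|\leq\e_c\}$, apply Lemma \ref{lem:berry_essen_var} there, bound $\sum_{i\in J}|a_i|^3\leq\e_c\sigma^2$ and $\sigma\geq\eta$ via incompressibility, and substitute $r=2C\e_0\eta$, $\e_c=\e_0\eta\sqrt{p}$ so the $\sqrt{p}$ factors cancel against $\E(X^3)=\E(Z^3)/\sqrt{p}$. You are in fact slightly more careful than the paper, which skips the explicit normalization of the restricted coefficient vector and the absorption of the constant $2C\e_0$ term (your appeal to $\E|Z|^3\geq 1$ for the latter is the step the paper leaves implicit).
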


\begin{proof}
	We recall that $a$ is incompressible if $\sum\limits_{j:|a_j| \leq \e_c} |a_j|^2 \geq \eta^2$. By using Lemma \ref{lem:berry_essen_var} we get:
		$$
		\sup\limits_t \P(|\sum_{i=1}^n a_i X_i-t| \leq r ) \leq\frac{r}{\sqrt{\sum\limits_{i=1}^n a_i^2}} + 2C_{BE} \frac{\E(X^3) \sum_{i=1}^n |a_i|^3}{\left(\sum\limits_{i=1}^n a_i^2 \right)^{3/2}}.
		$$
		Note that we can condition out variables, 
		$$
		\sup\limits_t \P(|\sum_{i=1}^n a_i X_i-t| \leq r ) \leq \sup\limits_t \P(|\sum_{i=1}^n a_i X_i-t| \leq r | X_1 = x_1 ).
		$$
		If we condition out all the $X_i$ for which $a_i>\e_c$, we get
		$$
		\sup\limits_t \P(|\sum_{i=1}^n a_i X_i-t| \leq r ) 
		\leq 
		\frac{r}{\sqrt{\sum\limits_{j:|a_j| \leq \e_c} a_j^2}} + 2C_{BE} \frac{\E(X^3) \sum_{j:|a_j| \leq \e} |a_j|^3}{\left(\sum\limits_{j:|a_j| \leq \e_c} a_j^2 \right)^{3/2}} 
		\leq
		\frac{r}{\eta} + 2C_{BE} \frac{\E(X^3) \e_c }{\eta}. 
		$$
		By substituting $r = 2C\e_0 \eta$ we have
		$$
		\sup\limits_t \P(|\sum_{i=1}^n a_i X_i-t| \leq 2C\e_0 \eta ) 
		\leq
		\frac{2C\e_0 \eta}{\eta} + 2C_{BE} \frac{\E(X^3) \e_c }{\eta}. 
		$$
		By using Lemma \ref{lem:sum_sub_gaussian_moments} and by substituting $\e_c = \e_0 \eta \sqrt{p}$ the proof is completed.
\end{proof}

\begin{lemma}[ $\Omega$ conserves incompressible vectors in a subspace] \label{lem:max_val_one_vec_incomp}
	Let $\Omega$ be a $k \times n$ ($n \ge k$) random matrix whose entries are i.i.d centered sub-Gaussian random variable as in Definition \ref{def:sub_gauss_X}. Denote $\e_c = \e_0 \eta \sqrt{p}$.
	Then, for any $(\e_c, \eta)$-incompressible $x \in S^{n-1}$, 
	$$
	\P ( \norm{\Omega x}_2  < 2C \alpha \e_0 \eta \sqrt{k}) \le (C_1(C) \e_0 \E(Z^3))^{k/4}
	$$
	for a constant $\alpha$.
\end{lemma}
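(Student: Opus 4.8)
The plan is to exploit the independence of the rows of $\Omega$, which makes the coordinates of $\Omega x$ independent, and then to combine the one-row small-ball estimate of Lemma~\ref{lem:one_row_incompressible_bound} with a tensorization (subset-counting) argument. Write $S_j = \sum_{i=1}^n \Omega_{ji} x_i$ for the $j$-th coordinate of $\Omega x$, $j = 1, \ldots, k$. Since the entries of $\Omega$ are i.i.d., its rows are independent, so $S_1, \ldots, S_k$ are i.i.d., and for the fixed incompressible $x \in S^{n-1}$ each $S_j$ is exactly the random sum treated in Lemma~\ref{lem:one_row_incompressible_bound} (with $a = x$ and $X_i = \Omega_{ji}$). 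Hence, setting $\delta \newdef 2 C_1(C) \e_0 \E(Z^3)$, we have $\P(|S_j| \le 2C\e_0\eta) \le \delta$ for every $j$.

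Next I would convert the norm bound into a statement about the number of ``small'' coordinates. Observe that $\norm{\Omega x}_2^2 = \sum_{j=1}^k S_j^2$, and that each coordinate with $|S_j| > 2C\e_0\eta$ contributes more than $4C^2\e_0^2\eta^2$ to this sum. Consequently, on the event $\norm{\Omega x}_2 < 2C\alpha\e_0\eta\sqrt{k}$ the number of coordinates with $|S_j| > 2C\e_0\eta$ must be strictly less than $\alpha^2 k$; equivalently, more than $(1-\alpha^2)k$ of the coordinates satisfy $|S_j| \le 2C\e_0\eta$. I would then fix $\alpha = 1/\sqrt{2}$, so that the bad event forces at least $k/2$ of the independent coordinates to be small.

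It remains to bound the probability that at least $k/2$ of the $S_j$ fall below the threshold. Letting $M = \#\{j : |S_j| \le 2C\e_0\eta\}$, a union bound over the $\binom{k}{k/2}$ subsets of size $k/2$, together with independence and $\P(|S_j|\le 2C\e_0\eta)\le\delta$, gives
$$
\P(M \ge k/2) \le \binom{k}{k/2}\delta^{k/2} \le 2^k \delta^{k/2} = \big(8 C_1(C)\e_0\E(Z^3)\big)^{k/2}.
$$
Writing $\gamma \newdef C_1(C)\e_0\E(Z^3)$ and assuming $\e_0$ is small enough that $64\gamma \le 1$ (which holds throughout the regime of interest, where $\e_0$ is a small tuning constant), one has $(8\gamma)^{k/2} \le \gamma^{k/4}$, since raising to the power $2/k$ shows this is equivalent to $(8\gamma)^2 \le \gamma$, i.e.\ to $64\gamma \le 1$. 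This yields the claimed bound $(C_1(C)\e_0\E(Z^3))^{k/4}$.

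The independence of the coordinates and the union bound are routine; the only real care is in the bookkeeping of constants, specifically the choice of $\alpha$ and the smallness requirement on $\e_0$ needed to absorb both the binomial factor $2^k$ and the factor $8$ into the exponent, turning $\delta^{k/2}$ into $\gamma^{k/4}$. I expect this constant-chasing to be the main (though shallow) obstacle. Any $\alpha$ with $\alpha^2 < 3/4$ works equally well, at the cost of a correspondingly stronger smallness condition on $\e_0$, if one prefers to retain the binomial coefficient via Lemma~\ref{lem:k_choose_alpha_k} rather than using the crude bound $\binom{k}{k/2}\le 2^k$.
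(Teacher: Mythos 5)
Your proof is correct and follows essentially the same route as the paper's: independence of the coordinates of $\Omega x$, the per-coordinate small-ball bound from Lemma~\ref{lem:one_row_incompressible_bound}, and a union/binomial bound on the number of small coordinates. The only difference is bookkeeping: the paper keeps $\alpha$ as a free small constant and absorbs the binomial factor via Lemma~\ref{lem:k_choose_alpha_k}, whereas you fix $\alpha = 1/\sqrt{2}$, use the crude bound $\binom{k}{k/2}\le 2^k$, and push the smallness requirement onto $\e_0$ (via $64\,C_1(C)\e_0\E(Z^3)\le 1$) --- an assumption the paper also needs implicitly for its bound to be non-trivial.
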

\begin{proof}
	The coordinates of the vector $\Omega x$ are independent linear
	combinations of i.i.d. sub Gaussian random variables with incompressible 
	coefficients $(x_1 \ldots x_n) \in S^{n-1}$. Hence, by Lemma \ref{lem:one_row_incompressible_bound}, $\P(|(\Omega x)_j|< 2C \e_0 \eta) \le C_1(C)\e_0 \E(Z^3) =\mu$ for all $j =1 , \ldots, N$.
	
	Assume that $\norm{\Omega x}_2< 2C \e_0 \eta \alpha \sqrt{k}$. Then, $|(\Omega x)_j|< 2C\e_0\eta$ for
	at least $\lfloor(1 - \alpha^2)k \rfloor $ coordinates. 
	Thus ,
	$$
	\begin{array}{ll}
	\P(\norm{\Omega x}_2< 2C \e_0 \eta \alpha  \sqrt{k}) &\leq \P\left(\mbox{at least } \lfloor (1-\alpha^2)k \rfloor \mbox{ coordinates satisfay } |(\Omega x)_j|< 2C \e_0 \eta \right) \\
	& = \sum\limits_{l<\lfloor(1-\alpha^2)k\rfloor}^k \binom{k}{l} \P(|(\Omega x)_1|< 2C \e_0 \eta)^l \left(1-\P(|(\Omega x)_1|< 2C \e_0 \eta)\right)^{k-l}\\
	& \leq \sum\limits_{l<\lfloor(1-\alpha^2)k\rfloor}^k \binom{k}{l} \mu^l \\
	& \leq \mu^{\lfloor(1-\alpha^2)k\rfloor}  \sum\limits_{l<\lfloor(1-\alpha^2)k\rfloor}^k \binom{k}{l}.
	\end{array}
	$$
	If $\alpha$ is sufficiently small, then 
	$\lfloor(1-\alpha^2)k\rfloor>k/2$ and 
	$$
	\P(\norm{\Omega x}_2< 2C \e_0 \eta \alpha  \sqrt{k}) \leq \mu^{k/2} \alpha^2 k \binom{k}{\lfloor(1-\alpha^2)k\rfloor}.
	$$
	For $\alpha$ sufficiently small, $ \alpha^2 k \binom{k}{\lfloor(1-\alpha^2)k\rfloor} \leq \mu^{-k/4}$.
	Thus, $	\P( \norm{\Omega x}_2  < 2C \e_0 \eta \alpha  \sqrt{k})
	\le  \mu^{-k/4} \cdot \mu^{k/2} \le \mu^{k/4}.
	$
\end{proof}

\begin{lemma} [$\Omega$ conserves any vector in a subspace]\label{lem:max_val_one_vec}
	Let $\Omega$ be a $k \times n$ ($n \ge k$) random matrix whose entries are i.i.d centered sub-Gaussian  random variable with variance $1$  as in Definition \ref{def:sub_gauss_X}.
	Let $C$ be a constant that will be chosen later, and let $\eta$ be small enough, such that $\eta^2 \ln \frac{1}{\eta} < c_4 p$. Then, for every $x \in S^{n-1}$, $\P ( \norm{\Omega x}_2  < 2 C \eta \sqrt{k}) \le (1-z_4'p)^{k/4}$ holds for a constant $c_4$.
\end{lemma}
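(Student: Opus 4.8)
The plan is to split $S^{n-1}$ according to the compressibility dichotomy of Section~\ref{sec:comp_incomp}, with $\e_c=\e_0\eta\sqrt p$ fixed as in Lemma~\ref{lem:max_val_one_vec_incomp}, and to bound $\P(\norm{\Omega x}_2<2C\eta\sqrt k)$ separately on the two pieces so that each bound is at most $(1-z_4'p)^{k/4}$. If $x$ is $(\eta,\e_c)$-incompressible, the estimate is already in hand: Lemma~\ref{lem:max_val_one_vec_incomp} gives $\P(\norm{\Omega x}_2<2C\alpha\e_0\eta\sqrt k)\le (C_1(C)\e_0\E(Z^3))^{k/4}$. I would first fix $\e_0$ small enough that $C_1(C)\e_0\E(Z^3)\le 1-z_4'p$ (possible since the right-hand side is bounded below by a positive absolute constant while $\e_0$ is free), and then take the constant $C$ of the present statement small enough that $2C\eta\sqrt k\le 2C\alpha\e_0\eta\sqrt k$, so that the ball $\{\norm{\Omega x}_2<2C\eta\sqrt k\}$ is contained in the event treated by Lemma~\ref{lem:max_val_one_vec_incomp}. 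These two adjustments yield the target bound $(1-z_4'p)^{k/4}$ on the incompressible part.

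For a $(\eta,\e_c)$-compressible $x$ I would argue directly, mirroring the counting in the proof of Lemma~\ref{lem:max_val_one_vec_incomp}. The $k$ coordinates $(\Omega x)_j=\sum_i \Omega_{ji}x_i$ are i.i.d.\ (the rows of $\Omega$ are independent), and each is a normalized sub-Gaussian sum over $x\in S^{n-1}$, so Lemma~\ref{lem:LPRT} with $\l=1/2$ applies to \emph{every} coefficient vector and gives $\P(|(\Omega x)_j|<1/2)\le 1-z_4'p=:\nu$. The geometric observation is that $\norm{\Omega x}_2<2C\eta\sqrt k$ forces $\sum_j(\Omega x)_j^2<4C^2\eta^2 k$, hence the number of coordinates with $|(\Omega x)_j|\ge 1/2$ is at most $16C^2\eta^2 k=:\beta k$, i.e.\ at least $(1-\beta)k$ coordinates satisfy $|(\Omega x)_j|<1/2$. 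Since this count is stochastically dominated by $\mathrm{Binomial}(k,\nu)$, a binomial tail bound gives
\[
\P(\norm{\Omega x}_2<2C\eta\sqrt k)\le \nu^{(1-\beta)k}\sum_{l\ge(1-\beta)k}\binom{k}{l}\le \nu^{(1-\beta)k}\,\beta k\binom{k}{\beta k}.
\]

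It remains to absorb the entropy factor into a loss of at most $\nu^{3k/4}$, i.e.\ to verify $\beta k\binom{k}{\beta k}\le \nu^{-(3/4-\beta)k}$, which turns the displayed bound into $(1-z_4'p)^{k/4}$. By Lemma~\ref{lem:k_choose_alpha_k}, $\binom{k}{\beta k}\le c_s^{\beta k\ln(1/\beta)}$, while $\ln(1/\nu)=\ln\frac{1}{1-z_4'p}\ge z_4'p$. Taking logarithms and using $\ln(1/\beta)\le 2\ln\frac{1}{\eta}$ and $3/4-\beta\ge 1/2$ for small $\eta$, the inequality reduces (up to the lower-order factor $\beta k$) to $\eta^2\ln\frac{1}{\eta}\le c_4 p$ with $c_4=\frac{z_4'}{64C^2\ln c_s}$ — precisely the hypothesis. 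Since every $x\in S^{n-1}$ is either compressible or incompressible and both cases yield $(1-z_4'p)^{k/4}$, the claim follows.

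The only real obstacle is the bookkeeping in the compressible case: one must ensure that the entropy term $\binom{k}{\beta k}=\exp(\Theta(\eta^2\ln(1/\eta)\,k))$ is dominated by the per-coordinate small-ball gain $\nu^{-\Theta(k)}=\exp(\Theta(pk))$. This is exactly the tension that $\eta^2\ln\frac{1}{\eta}<c_4p$ resolves, and $c_4$ (together with the admissible range of $C$) must be chosen jointly with the $\e_0$ used to align the incompressible threshold. None of the individual steps is difficult once Lemmas~\ref{lem:LPRT}, \ref{lem:k_choose_alpha_k} and \ref{lem:max_val_one_vec_incomp} are available; the care lies entirely in the order of quantifiers for the constants.
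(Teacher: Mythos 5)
Your argument is correct, and its computational core --- Lemma~\ref{lem:LPRT} with $\l=1/2$ to get $\P(|(\Omega x)_j|<1/2)\le 1-z_4'p$, the observation that $\norm{\Omega x}_2<2C\eta\sqrt k$ forces at least $(1-16C^2\eta^2)k$ coordinates below $1/2$, the binomial/entropy count via Lemma~\ref{lem:k_choose_alpha_k}, and the absorption of the entropy factor into $(1-z_4'p)^{-k/4}$ under the hypothesis $\eta^2\ln\frac{1}{\eta}<c_4p$ --- is exactly the paper's proof. The structural difference is that the paper does not split $S^{n-1}$ at all: it runs this single argument uniformly over every $x\in S^{n-1}$, which is legitimate precisely for the reason you yourself note, namely that Lemma~\ref{lem:LPRT} applies to \emph{every} coefficient vector, compressible or not. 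Your incompressible branch is therefore redundant, and it is also the weakest part of the write-up: routing through Lemma~\ref{lem:max_val_one_vec_incomp} forces you to shrink the present lemma's $C$ below $\alpha\e_0 C'$, which sits awkwardly with the later use of this lemma in Theorem~\ref{thm:rectangular_small_val_subspace} with $C=M>\alpha C_0$, and it couples the choice of $\e_0$ (hence of $\e_c$) to this lemma when the theorem needs $\e_0$ free for the net constructions. The dichotomy between compressible and incompressible vectors is genuinely needed only one level up, in Theorem~\ref{thm:rectangular_small_val_subspace}, where the two classes get nets of different cardinalities and hence need probability bounds of different strengths; at the level of a single fixed $x$ the uniform bound suffices, and dropping your first case leaves a proof that coincides with the paper's.
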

\begin{proof}
	The coordinates of the vector $\Omega x$ are independent linear
	combinations of i.i.d. sub Gaussian random variables with
	coefficients $(x_1 \ldots x_n) \in S^{n-1}$. Hence, for $\l = 1/2$,by Lemma \ref{lem:LPRT}  $\P(|(\Omega x)_j|< 1/2) \le 1-z_4'p$, $j =1 , \ldots, N$.
	
	Assume that $\norm{\Omega x}_2< 2 C \eta \sqrt{k}$. Then, $|(\Omega x)_j|< 1/2$ for
	at least $\lfloor(1-4 \cdot 2^2 C^2  \eta^2)k \rfloor $ coordinates. 
	Thus 
	$$
	\begin{array}{ll}
	\P(\norm{\Omega x}_2< 2 C \eta \sqrt{k}) &\leq \P(\# \lfloor (1-16 C^2  \eta^2)k \rfloor \mbox{ coordinates satisfay } |(\Omega x)_j|< 1/2)  \\
	& = \sum\limits_{l<\lfloor(1-16 C^2 \eta^2)k\rfloor}^k \binom{k}{l} \P(|(\Omega x)_1|<1/2)^l \left(1-\P(|(\Omega x)_1|<1/2)\right)^{k-l}\\
	& \leq \sum\limits_{l<\lfloor(1-16 C^2 \eta^2)k\rfloor}^k \binom{k}{l} (1-z_4'p)^l \\
	& \leq (1-z_4'p)^{\lfloor(1-16 C^2 \eta^2)k\rfloor}  \sum\limits_{l<\lfloor(1-16 C^2 \eta^2)k\rfloor}^k \binom{k}{l}.
	\end{array}
	$$
	If $\eta$ is sufficiently small, then 
	$\lfloor(1-16 C^2 \eta^2)k\rfloor>k/2$ and 
	$$
	\P(\norm{\Omega x}_2 < 2 C \eta \sqrt{k}) \leq (1-z_4'p)^{k/2} 16 C^2 k \binom{k}{\lfloor(1-16 C^2 \eta^2)k\rfloor}.
	$$
	
	From Lemma \ref{lem:k_choose_alpha_k} follows that for $\eta$ sufficiently small, 
	$$
	16 C^2 k \binom{k}{\lfloor(1-16 C^2 \eta^2 )k\rfloor} \leq 16 C^2 k c_s^{16 C^2 \eta^2 k \ln(\frac{1}{16 C^2 \eta^2}-1)} < c_1^{ \eta^2 k \ln(\frac{1}{16 C^2 \eta^2}-1)} < c_2^{ k \eta^2  \ln\frac{1}{\eta^2}}.
	$$
	Additionally,
	$
	(1-z_4'p)^{-k/4} > c_3^{c_2 p k}
	$.
	Thus, for $\eta$ such that $\eta^2 \ln \frac{1}{\eta} < c_4 p$,  $ 16 C^2 k \binom{k}{\lfloor(1-16 C^2 \eta^2 )k\rfloor} \leq (1-z_4'p)^{-k/4}$ holds.
	Thus, $	\P( \norm{\Omega x}_2  < 2 C  \eta \sqrt{k})
	\le   (1-z_4'p)^{-k/4} \cdot  (1-z_4'p)^{k/2} \le  (1-z_4'p)^{k/4}.
	$
	
\end{proof}

The proof of Theorem \ref{thm:max_on_subspase} is similar to the proof of Theorem \ref{thm:norm}.

\begin{theorem}[Maximum value in a subspace]
    \label{thm:max_on_subspase}
    Let $U \subset \R^n $ be a linear subspace of dimension $r$. Let $\Omega$ be a $k \times n$ random matrix where  $n \geq k > r$ and
     $k = \mathcal{O}(r)$ is sufficiently large. Assume the entries of
     $\Omega$ are i.i.d centered sub-Gaussian random variables.
    Then, for $t \ge C_0$ we have
    $$
    \P \left( \max_{x\in U, \|x\| = 1}\|\Omega x\| > t \sqrt{k} \right) \le e^{-c_0 t^2 k}
   .
    $$
\end{theorem}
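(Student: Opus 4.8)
The plan is to mimic the $\e$-net proof of Theorem \ref{thm:norm}, but to run the covering argument \emph{inside} the $r$-dimensional subspace $U$ rather than over the full sphere $S^{n-1}$, so that the union bound only costs a factor $6^r$ instead of $6^n$. Writing $S_U = \{x \in U : \norm{x}=1\}$ for the unit sphere of $U$, I first apply Lemma \ref{lem:eps_net_size} in dimension $r$ (the unit sphere of an $r$-dimensional subspace is isometric to $S^{r-1}$) to obtain a $1/2$-net $\NN \subset S_U$ with $|\NN| \le 6^r$. The standard net-approximation step then gives $\max_{x \in S_U}\norm{\Omega x} \le 2\max_{y\in\NN}\norm{\Omega y}$: if $x^\ast \in S_U$ attains the maximum $M$ and $y \in \NN$ satisfies $\norm{x^\ast - y} \le 1/2$, then, since $x^\ast - y \in U$ and $M$ is the operator norm of $\Omega$ restricted to $U$, we get $\norm{\Omega y} \ge M - \norm{\Omega(x^\ast-y)} \ge M - M\norm{x^\ast-y} \ge M/2$. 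Hence it suffices to control $\norm{\Omega y}$ for a single fixed $y \in \NN$ and then union bound.

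Next I fix $y \in S_U$ and establish the single-vector tail bound $\P(\norm{\Omega y} > s\sqrt{k}) \le e^{-c_1 s^2 k}$ for $s \ge C_0'$. The $k$ coordinates of $\Omega y$ are independent, since they involve distinct rows of $\Omega$, and each $(\Omega y)_i = \sum_j y_j \Omega_{ij}$ is a linear combination of i.i.d. centered sub-Gaussian entries with $\sum_j y_j^2 = 1$; by Theorem \ref{thm:Hoeffding} it is therefore sub-Gaussian with a parameter independent of $n$, i.e. $\P(|(\Omega y)_i| > u) \le 2e^{-cu^2}$. Thus $\norm{\Omega y}^2 = \sum_{i=1}^k (\Omega y)_i^2$ is a sum of $k$ i.i.d. sub-exponential variables, each of mean $1$, and a Chernoff bound on its moment generating function yields the claimed estimate.

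Finally, combining the net reduction with a union bound over $\NN$ gives $\P(\max_{x\in S_U}\norm{\Omega x} > t\sqrt k) \le |\NN|\,\P(\norm{\Omega y} > \tfrac{t}{2}\sqrt k) \le 6^r e^{-c_1 t^2 k/4}$. Since $k > r$ we have $6^r \le 6^k = e^{(\ln 6)k}$, so the right-hand side is at most $e^{(\ln 6 - c_1 t^2/4)k}$; choosing $c_0 = c_1/8$ and $C_0$ large enough (depending on $c_1$ and $C_0'$) that $c_1 t^2/8 \ge \ln 6$ for all $t \ge C_0$ produces the desired bound $e^{-c_0 t^2 k}$.

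The main obstacle I anticipate is the single-vector concentration inequality: one must verify that in the relevant large-deviation regime (a deviation of order $t^2 k$, with $t$ bounded away from $0$ by $C_0$) the sub-exponential tail of $\sum_i (\Omega y)_i^2$ still decays like $e^{-c t^2 k}$, with an exponent quadratic in $t$ rather than merely linear. This is exactly the mechanism already present in the proof of Theorem \ref{thm:norm}: because the threshold $t\sqrt k$ sits a constant factor above the typical value $\sqrt k$, the linear (Bernstein/sub-exponential) regime of the tail bound produces an exponent proportional to $t^2 k$, which is precisely what matches the $6^r \le 6^k$ cost of the net. Note that a direct reduction to Theorem \ref{thm:norm} via an orthonormal basis $V$ of $U$ is \emph{not} available, because the entries of the $k \times r$ matrix $\Omega V$ are only uncorrelated within a row, not independent, so the net argument must be redone rather than invoked.
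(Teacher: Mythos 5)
Your proof is correct, but it routes the covering argument differently from the paper. The paper's proof uses \emph{two} nets: a $1/2$-net $\NN$ of the unit sphere of $U$ with $|\NN|\le 6^r$ and a $1/2$-net $\MM$ of $S^{k-1}$ with $|\MM|\le 6^k$, reduces the restricted operator norm to $4\max_{x\in\NN,\,y\in\MM}|\pr{\Omega x}{y}|$, and then needs only the scalar Hoeffding-type bound of Theorem \ref{thm:Hoeffding} applied to the bilinear form $\pr{\Omega x}{y}=\sum_{i,j}\Omega_{ij}y_i x_j$, whose coefficient vector has unit norm; the union bound over $6^{r+k}\le 6^{2k}$ pairs is absorbed by the exponent $c_1t^2k/16$ since $t\ge C_0$. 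You instead use a single net inside $U$ and control $\norm{\Omega y}_2^2$ for fixed $y$ as a sum of $k$ independent sub-exponential variables via a Bernstein/Chernoff bound. Both are sound, and you correctly isolate and resolve the one delicate point in your version: in the regime $t\ge C_0$ the deviation $t^2k$ sits in the linear (sub-exponential) part of the tail, which still yields an exponent proportional to $t^2k$ because $t$ is bounded below. The trade-off is that the paper stays entirely within lemmas it has already stated (Lemma \ref{lem:eps_net_size} and Theorem \ref{thm:Hoeffding}) at the cost of a second net, whereas your argument is leaner combinatorially but imports a standard concentration inequality for sums of squares of sub-Gaussian variables that the paper never states, so a self-contained write-up would have to supply that estimate. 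Your closing observation --- that one cannot reduce to Theorem \ref{thm:norm} by passing to $\Omega V$ for an orthonormal basis $V$ of $U$, because the entries of $\Omega V$ within a row are uncorrelated but not independent --- is accurate and is precisely the reason the net argument must be rerun inside $U$, as both you and the paper do.
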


\begin{proof}
    Let $\NN$ be a $(1/2)$-net of the $r$-dimensional unit sphere of the image of $U$. 
    Let $\MM $ be a  $(1/2)$-net of the $k$-dimensional unit sphere of the image of $\Omega$. 
    For any $u \in U$ where $\|u\| = 1$, we can choose  $x \in \NN$  such that $\norm{x-u}_2 <1/2$.
    Then,
    $$
    \norm{\Omega u}_2 \le \norm{\Omega x}_2+ \norm{x-u}_2 \max\limits_{u_1 \in U, \|u_1\|=1} \norm{\Omega u_1}
    .
    $$
   Thus,
    $$
    \max\limits_{u_1 \in U, \|u_1\|=1} \norm{\Omega u_1}  \le \norm{\Omega x}_2+\frac{1}{2} \max\limits_{u_1 \in U, \|u_1\|=1} \norm{\Omega
    u_1}.
    $$
    This shows that $\norm{\Omega } \le 2 \sup_{x \in \NN} \norm{\Omega x}_2 = 2 \sup_{x \in \NN} \sup_{v \in S^{k-1}} \pr{\Omega x}{v}$.
    In a similar way, by approximating $v$ with an element from
    $\MM $  we get
    $$
    \sup\limits_{x \in \NN, v \in S^{k-1}} \pr{\Omega x}{v} \leq \sup\limits_{x \in \NN, v \in \MM} \pr{\Omega x}{v} + \frac{1}{2}\sup\limits_{x \in \NN, v' \in S^{k-1}} \pr{\Omega
    x}{v}.
    $$
    We obtain
    $
    \norm{\Omega} \le 4 \max_{ x \in \NN, \ y \in \MM} |\pr{\Omega x}{y}|.
    $
    By Lemma \ref{lem:eps_net_size}, we can choose these
    nets to be $ |\NN|  \le 6^r$ and $|\MM|  \le 6^k$.

    By Theorem \ref{thm:Hoeffding}, for  every $x \in \NN$ and $y \in \MM$,
    the random variable $\pr{\Omega x}{y} =\sum_{j=1}^k \sum_{k=1}^n a_{j,k} y_j x_k $ is sub-Gaussian,
    i.e. for $t>0$
    $$
    \P \big( |\pr{\Omega x}{y} | > t \sqrt{k} \big) \le C_2 e^{-c_1 t^2 k}
    .
    $$
    By taking the union bound we get
    $$
    \begin{array}{ll}
    \P \big( \|\Omega\|_2 > t \sqrt{k} \big)
    &\le  |\NN| |\MM|
    \P \left( |\left\langle \Omega x,y \right\rangle |  > t \sqrt{k}/4, x \in \NN, \,
    y \in \NN  \right) \\
    &\le  6^k \cdot 6^r \cdot  C_2 e^{-c_1/16 t^2 k}
    \le C_2  e^{-c_0 t^2 k},
    \end{array}
    $$
    provided that $t \ge C_0$ for an appropriately chosen constant $C_0>0$.
    This completes the proof.
\end{proof}

By combining Theorem \ref{thm:max_on_subspase} with the $\e$-net
argument and Lemmas \ref{lem:max_val_one_vec_incomp} and \ref{lem:max_val_one_vec} we obtain an estimate for the smallest value of $\|\Omega v\|$ for $v$ in a subspace of dimension $r$.
\begin{theorem}[Smallest value on a subspace]\label{thm:rectangular_small_val_subspace}
	There are constants $M$ and $D$ such that for any $n,r \in \N$, $p \in \R, 0<p<1$,  $n>r$, and for any $r$ dimensional linear subspace $U \subset \R^n$, if $k > D\log\left(\frac{1}{p}\right)\left(r + \frac{1}{p^3}\right)$ then for $\Omega \in M_{k\times n}$ with	centered sub-Gaussian random i.i.d entries as in Definition \ref{def:sub_gauss_X},
	\begin{equation} \label{eq:prob_smallest_value}
	\P \left( \min_{ x \in U, \|x\| = 1} \norm{\Omega x}_2 \le M \eta \sqrt{k}  \right)
	\ll 1
	\end{equation}
	holds for $\eta < \mathcal{O}(\sqrt{p})$.
\end{theorem}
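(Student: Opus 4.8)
The plan is to follow the Rudelson--Vershynin scheme used in \cite{tao2012topics} and \cite{rudelson2014recent}: partition the unit sphere of $U$ into compressible and incompressible vectors, handle each family with an $\e$-net together with the single-vector anti-concentration bounds already established, and pass from the nets to the whole sphere using the operator-norm bound of Theorem \ref{thm:max_on_subspase}. Fixing the threshold $\e_c=\e_0\eta\sqrt p$ of Lemmas \ref{lem:one_row_incompressible_bound} and \ref{lem:max_val_one_vec_incomp}, I would start from
\[
\min_{x\in U,\,\|x\|=1}\|\Omega x\|_2 \ \ge\ \min\Big(\inf_{x\ \mathrm{incompressible}}\|\Omega x\|_2,\ \inf_{x\ \mathrm{compressible}}\|\Omega x\|_2\Big),
\]
so that the event in \eqref{eq:prob_smallest_value} is contained in the union of three bad events: failure of the operator-norm bound, failure on the incompressible net, and failure on the compressible net. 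The target bound on $k$ will then emerge as the sum of an $r\log(1/p)$ contribution (incompressible) and a $p^{-3}\log(1/p)$ contribution (compressible).

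First I would fix the operator-norm event. Applying Theorem \ref{thm:max_on_subspase} with a constant $t=C_0$ gives $\|\Omega\|_U \newdef \max_{x\in U,\|x\|=1}\|\Omega x\|_2 \le C_0\sqrt k$ outside an event of probability $e^{-c_0 C_0^2 k}$, which is negligible once $k=\mathcal O(r)$. On this event every net estimate transfers to the sphere: if $x$ is the closest net point to $u$, then $\|\Omega u\|_2 \ge \|\Omega x\|_2 - \e_{\mathrm{net}}\,C_0\sqrt k$, so a net lower bound of order $\eta\sqrt k$ survives provided $\e_{\mathrm{net}} \lesssim \eta$.

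For the incompressible vectors I would take an $\e_{\mathrm{net}}$-net of the incompressible unit vectors of $U$; as these lie on the $r$-dimensional sphere, its cardinality is at most $(c/\e_{\mathrm{net}})^r$ by the counting in Lemma \ref{lem:eps_net_size}. Lemma \ref{lem:max_val_one_vec_incomp} bounds the failure of each net point by $\mu^{k/4}$ with $\mu=C_1(C)\e_0\E(Z^3)$, and choosing $\e_0$ small makes $\mu$ a small absolute constant. A union bound followed by the transfer step then yields $\inf_{\mathrm{incomp}}\|\Omega x\|_2 \ge C\alpha\e_0\eta\sqrt k$ off an event of probability $(c/\e_{\mathrm{net}})^r\mu^{k/4}$. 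Since the transfer forces $\e_{\mathrm{net}}\lesssim\e_0\eta\sim\e_0\sqrt p$, we have $\log(c/\e_{\mathrm{net}})\sim\log(1/p)$, and the union bound is $\ll 1$ precisely when $k \gtrsim r\log(1/p)$.

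The compressible vectors form the delicate case and drive the $p^{-3}$ term. Here I would use the net of Lemma \ref{lem:eps_net_compressible_size}, whose log-cardinality carries the factor $\e_c^{-2}=1/(\e_0^2\eta^2 p)$, together with the single-vector bound of Lemma \ref{lem:max_val_one_vec}, whose per-point failure is only $(1-z_4'p)^{k/4}$ because the sparsity of $\Omega$ weakens the anti-concentration gain to order $p$ per coordinate. Balancing $\log|\NN_{\mathrm{comp}}|$ against $\tfrac{k}{4}z_4'p$, with $\eta=\Theta(\sqrt p)$ and $\e_c=\e_0\eta\sqrt p$, the dominant requirement is $k\gtrsim \e_c^{-2}\log(1/\eta)/p \sim p^{-3}\log(1/p)$, which also fixes the admissible range $\eta<\mathcal O(\sqrt p)$ (sharpened to the hypothesis $\eta^2\ln(1/\eta)<c_4 p$ of Lemma \ref{lem:max_val_one_vec}). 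Setting $M=\min(C\alpha\e_0,\,2C-C_0\e_{\mathrm{net}}/\eta)$ and summing the three failure probabilities completes the argument once $k>D\log(1/p)(r+p^{-3})$. I expect the main obstacle to be exactly this balancing in the compressible regime: the competing exponents $\e_c^{-2}\sim 1/(\e_0^2 p^2)$ (net inflation from near-sparse vectors) and $pk$ (anti-concentration, degraded by sparsity) must be made to cancel through the coupled choice $\e_c=\e_0\eta\sqrt p$, $\eta=\Theta(\sqrt p)$, and verifying that the residual cross-terms stay below the clean bound $D\log(1/p)(r+p^{-3})$ is where the calculation is most error-prone.
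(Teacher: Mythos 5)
Your proposal follows essentially the same route as the paper's proof: the same split into compressible and incompressible unit vectors of $U$, the same pairing of the compressible net (Lemma \ref{lem:eps_net_compressible_size}) with Lemma \ref{lem:max_val_one_vec} and of the incompressible net with Lemma \ref{lem:max_val_one_vec_incomp}, the same transfer to the full sphere via Theorem \ref{thm:max_on_subspase}, and the same balancing that yields the $r\log(1/p)$ and $p^{-3}\log(1/p)$ contributions (including absorbing the $p^{-3}\log r$ cross-term). The outline is correct and matches the paper's three-step argument.
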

\begin{proof}
	The proof is divided into three steps. In steps 1 and 2  $\Omega$ is bounded on incompressible and compressible vectors, respectively, and in step 3 these results are joined to complete the proof.
	We set $M$ to be $M > \alpha C_0$ where $C_0$ comes from Theorem \ref{thm:max_on_subspase}, and $\alpha$ from Lemma \ref{lem:max_val_one_vec_incomp}, such that $e^{-c_0 \frac{M}{\alpha}^2 k}$ from Theorem  \ref{thm:max_on_subspase} is sufficiently small.
	\begin{description}
			\item[Step 1:]
			Let $\NN $ be a $\alpha \eta$ - net of the set of $(\e_c, \eta)$-incompressible vectors in the image of $U$.
			the number of vectors in $\NN $ is bounded by $\left(\frac{3}{\alpha \eta}\right)^r$. 
			From Lemma \ref{lem:max_val_one_vec_incomp} with $C = \frac{M}{\alpha} $ follows that for any vector $x\in \NN $ and for $\e_c = \e_0 \eta \frac{1}{\sqrt{p}}$,
			$$
			\P ( \norm{\Omega x}_2  < 2M \eta \sqrt{k}) \le ( C_1 \e_0 E(Z^3))^{k/4}.
			$$
			Thus, by the union bound with failure probability of not more than 
			\begin{equation} \label{eq:prob_incompressible_vecs}
			\left(\frac{3}{\alpha \eta}\right)^r \cdot \left( C_1 \e_0 E(Z^3)\right)^{k/4}
			\end{equation}
			the following 
			\begin{equation} \label{eq:min_incompressible_net}
			\min_{x \in \NN} \norm{\Omega x}_2 \ge 2M \alpha \eta \sqrt{k}
			\end{equation}
			holds.
			Since $\NN$ is an $\alpha \eta$-net of the $(\e_c, \eta)$-incompressible vectors in the image of $U$, with the probability given in Eq. \eqref{eq:prob_incompressible_vecs}, then Eq.\eqref{eq:min_incompressible_net} holds. By Theorem \ref{thm:max_on_subspase} we have, for any incompressible vector $y$, 
			$$
			\|\Omega y\| \geq \min_{x \in \NN } \norm{\Omega x}_2 - \alpha \eta \|\Omega\| \geq 2M \alpha \eta \sqrt{k} - M \alpha \eta \sqrt{k}  = M \alpha \eta \sqrt{k}.
			$$
			
			\item[Step 2:]
			Let $\MM $ be a $\eta$ - net of the set  of $(\e_c, \eta)$-compressible vectors in the image of $U$.
			The number of vectors in $\MM $ is bounded by Lemma \ref{lem:eps_net_compressible_size} with  $r^{\frac{1}{\e_c^2}} \eta^{r-\frac{1}{\e_c^2}} \left(\frac{1}{\eta}\right)^r = r^{\frac{1}{\e_c^2}} \eta^{-\frac{1}{\e_c^2}}$. From Lemma \ref{lem:max_val_one_vec} with $C = M $ it follows that for any vector $x \in \MM $, 
			$$
			\P ( \norm{\Omega x}_2  < 2 M \eta \sqrt{k}) \le (1-z_4'p)^{k/4}.
			$$	
			Thus, by the union bound with failure probability of not more than 
			
			\begin{equation} \label{eq:prob_compressible_vecs}
			r^{\frac{1}{\e_c^2}} \eta^{-\frac{1}{\e_c^2}} \cdot (1-z_4'p)^{k/4}, 
			\end{equation}
			the following 
			\begin{equation} \label{eq:min_compressible_net}
			\min_{x \in \MM } \norm{\Omega x}_2 \ge 2M \eta \sqrt{k}
			\end{equation}
			holds.
			Since $\MM$ is an $\eta$-net of the $(\e_c, \eta)$-compressible vectors in the image of $U$, with the probability given in Eq.\eqref{eq:prob_compressible_vecs}, then Eq.  \eqref{eq:min_compressible_net} holds. For any compressible vector $y$ we have
			$$
			\|\Omega y\| \geq \min_{x \in \NN } \norm{\Omega x}_2 - \eta \|\Omega\| \geq 2M \eta \sqrt{k} - M \eta \sqrt{k}  = M \eta \sqrt{k}.
			$$
			
			Thus, if Eqs. \eqref{eq:prob_compressible_vecs} and \eqref{eq:prob_incompressible_vecs} are small enough, then, by Theorem \ref{thm:max_on_subspase}
			$$
			\min\limits_{y \in U, \|y\| = 1} \|\Omega y \| \geq M \alpha \e_c \sqrt{k}.
			$$

			\item[Step 3:]
			The probabilities in Eqs. \eqref{eq:prob_incompressible_vecs} and \eqref{eq:prob_compressible_vecs} are analyzed next. We have
			$$
			\left(\frac{3}{\alpha \eta}\right)^r \cdot \left( C_1 \e_0 E(Z^3)\right)^{k/4} =e^{r \log(\frac{3}{\alpha \eta}) - k/4 \log(\frac{1}{C_1\e_0 z_3})} 
			$$
			
			$$
			r^{\frac{1}{\e_c^2}} \eta^{-\frac{1}{\e_c^2}} \cdot (1-z_4'p)^{k/4} \leq e^{\frac{1}{\e_0^2 \eta^2 p}\log(r) +\frac{1}{\e_0^2 \eta^2 p}\log(\frac{1}{\eta}) -  c_1pk} 
			$$
			for some $c_1$ that depends only on $z_4$. Lemma \ref{lem:max_val_one_vec} holds for $\eta = p^{1/2 - \epsilon}$ and the probabilities in Eqs.   \eqref{eq:prob_incompressible_vecs} and \eqref{eq:prob_compressible_vecs} are less than 
			$$
			e^{r \log(\frac{3}{\alpha \e_0 \eta \sqrt{p}}) - k/4 \log(\frac{1}{M \e_0 z_3})} < 
			e^{r \log(\frac{c_5}{p}) - c_6 k} 
			$$
			and
			$$
			e^{\frac{1}{\e_0^2 \eta^2 p}\log(r) +\frac{1}{\e_0^2 \eta^2 p}\log(\frac{1}{\eta}) -  c_1pk}  < e^{c_7\frac{1}{p^2}\log(r) +c_8\frac{1}{p^2}\log(\frac{1}{\eta}) -  c_1pk} 
			$$
			for constants $c_i$. Thus, for Eq. \ref{eq:prob_smallest_value} to hold, $k$ has to satisfy
			\begin{equation} \label{eq:k_bound1}
			c_9 r \log(\frac{c_5}{p}) \ll k
			\end{equation}
			and
			\begin{equation}\label{eq:k_bound2}
			c_{10}\frac{1}{p^3}\log(r) +c_{11}\frac{1}{p^3}\log(\frac{1}{p}) \ll  k.
			\end{equation}

			Note that $\frac{1}{p^3}\log(r)$ is bounded by $\mathcal{O}(r \log(\frac{c_5}{p}))$ or $\mathcal{O}(\frac{1}{p^3}\log(\frac{1}{p}))$. Thus, there exists some constant $D$ such that Eqs. \ref{eq:k_bound1} and \ref{eq:k_bound2} are equivalent to
			\begin{equation*}
			D\log\left(\frac{1}{p}\right)\left(r + \frac{1}{p^3}\right) \ll  k.
			\end{equation*}
	\end{description}

\end{proof}

\section{Approximated matrix decompositions}
\subsection{Randomized SVD using sparse projections} \label{sec:Sparse_Randomized_SVD}
We present an algorithm that approximates the SVD decomposition of
any matrix $A$. We first recall a known result (e.g. see \cite{halko2011finding}).

\begin{theorem}[Theorem 11.2 in \cite{halko2011finding}]\label{thm:A_QQA}
	Let $A$ be an $m\times n$ matrix with singular values $\sigma_1, \ldots, \sigma_n$ in descending order. For any integer $0<r<m$, let $\Omega$ be a $n \times k $
	random matrix.
	Denote $Y = A\Omega$ and $Y = QR$ where $Q$ is a matrix with orthonormal columns and $R$ is a full rank
	triangular matrix. If for any subspace $U \subset \R^n$ of dimension $k$, $\min\limits_{x \in U} \|\Omega x\|_2$ and  $\|\Omega \|_2$ are bounded from below and from above, respectively, with high probability,  
	Then, with high probability,
	\begin{equation}\label{eq:A_QQA}
	\|A-QQ^*A\|_2 \leq \mathcal{O}_\sigma(\sigma_{r+1})
	\end{equation}
	and 
	\begin{equation}\label{eq:A_QQA_F}
	\|A-QQ^*A\|_F \leq \mathcal{O}_\sigma(\Delta_{r+1}).
	\end{equation}
\end{theorem}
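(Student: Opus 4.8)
The plan is to reduce the statement to a deterministic perturbation bound for the orthogonal projector $P_Y = QQ^*$ onto $\mathrm{range}(Y) = \mathrm{range}(A\Omega)$, and then to feed in the two probabilistic hypotheses. First I would write the thin SVD $A = U\Sigma V^*$ and split the right singular vectors as $V = [\,V_1 \mid V_2\,]$, where $V_1 \in \R^{n\times r}$ collects the leading $r$ directions and $V_2$ the remaining ones; correspondingly $\Sigma = \mathrm{diag}(\Sigma_1,\Sigma_2)$ with $\|\Sigma_2\|_2 = \sigma_{r+1}$ and $\|\Sigma_2\|_F = \Delta_{r+1}$. Setting $\Omega_1 = V_1^*\Omega \in \R^{r\times k}$ and $\Omega_2 = V_2^*\Omega$, the quantity to control is $\|(I-P_Y)A\|$, since $QQ^*A = P_Y A$: the full-rank factor $R$ forces $\mathrm{range}(Q)=\mathrm{range}(Y)$, so $QQ^*$ is exactly the orthogonal projector onto $\mathrm{range}(A\Omega)$.

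The algebraic core is the deterministic estimate, valid in both the spectral and Frobenius norms whenever $\Omega_1$ has full row rank,
$$\|(I-P_Y)A\|^2 \le \|\Sigma_2\|^2 + \|\Sigma_2\Omega_2\Omega_1^\dagger\|^2.$$
I would establish it exactly as in \cite{halko2011finding}: using the invariance $\mathrm{range}(Y)=U\cdot\mathrm{range}\big([\Sigma_1\Omega_1;\,\Sigma_2\Omega_2]\big)$, one exhibits an explicit matrix built from $\Omega_1^\dagger$ whose leading block is the identity in the basis $U$, so that the orthogonal projector $P_Y$ — being optimal — approximates $A$ at least as well as the projector onto the span of that explicit matrix, and the residual is then governed entirely by $\Sigma_2$ and $\Sigma_2\Omega_2\Omega_1^\dagger$. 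Flattening the leading block is precisely what removes all dependence on $\Sigma_1$, i.e. on the first $r$ singular values. This step, where the full row rank of $\Omega_1$ is indispensable, is the main obstacle: it requires that $\Omega$ not collapse the $r$-dimensional subspace $\mathrm{range}(V_1)$, so that $\Omega_1^\dagger$ exists and, more importantly, has controlled norm.

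That non-collapsing is supplied by the hypotheses of the theorem, which abstract the conclusions of Theorems \ref{thm:norm} and \ref{thm:rectangular_small_val_subspace}. I would bound $\|\Omega_2\|_2 \le \|\Omega\|_2$ from above by the operator-norm hypothesis, giving $\|\Omega\|_2 = \mathcal{O}(\sqrt{k})$, and bound $\|\Omega_1^\dagger\|_2 = 1/\sigma_{\min}(\Omega_1)$ from above by the smallest-value-on-a-subspace hypothesis. Since $\sigma_{\min}(\Omega_1) = \min_{\|z\|=1}\|\Omega^*V_1 z\|$ is precisely the smallest value of $\Omega^*$ on the $r$-dimensional subspace $\mathrm{range}(V_1)$, applying Theorem \ref{thm:rectangular_small_val_subspace} to $\Omega^*$ yields $\sigma_{\min}(\Omega_1) \ge M\eta\sqrt{k}$ with high probability. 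Consequently the product $\|\Omega_2\|_2\,\|\Omega_1^\dagger\|_2 = \mathcal{O}(\sqrt{k})\cdot\mathcal{O}\big(1/(\eta\sqrt{k})\big) = \mathcal{O}(1/\eta)$ is an absolute constant, independent of the singular values of $A$.

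Finally I would combine. By submultiplicativity, $\|\Sigma_2\Omega_2\Omega_1^\dagger\|_2 \le \sigma_{r+1}\,\|\Omega_2\|_2\,\|\Omega_1^\dagger\|_2$, so the deterministic bound gives $\|(I-P_Y)A\|_2 \le \sigma_{r+1}\sqrt{1+\mathcal{O}(1/\eta^2)} = \mathcal{O}_\sigma(\sigma_{r+1})$, which is \eqref{eq:A_QQA}. The Frobenius claim \eqref{eq:A_QQA_F} follows verbatim after replacing the first factor by its Frobenius norm, namely $\|\Sigma_2\Omega_2\Omega_1^\dagger\|_F \le \|\Sigma_2\|_F\,\|\Omega_2\|_2\,\|\Omega_1^\dagger\|_2 = \Delta_{r+1}\,\mathcal{O}(1/\eta)$, so that $\|(I-P_Y)A\|_F \le \mathcal{O}_\sigma(\Delta_{r+1})$. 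A union bound over the two failure events — the operator norm being too large and the smallest subspace value being too small — keeps the overall success probability high.
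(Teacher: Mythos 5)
Your proposal is correct in substance and follows the same route the paper intends: the paper does not actually prove this statement but cites it as Theorem 11.2 of \cite{halko2011finding}, and what you have written is a faithful reconstruction of that argument --- the deterministic projector bound $\|(I-P_Y)A\|^2 \le \|\Sigma_2\|^2 + \|\Sigma_2\Omega_2\Omega_1^\dagger\|^2$ (Theorem 9.1 there), followed by probabilistic control of $\|\Omega_1^\dagger\|_2$ via the smallest-value-on-a-subspace estimate and of $\|\Omega_2\|_2$ via the operator-norm bound. Your identification of $\sigma_{\min}(\Omega_1)$ with the minimum of $\|\Omega^* x\|$ over the unit sphere of the $r$-dimensional subspace $\mathrm{range}(V_1)$ is exactly how Theorem \ref{thm:rectangular_small_val_subspace} is meant to be plugged in. One quantitative slip: for an $n\times k$ i.i.d.\ sub-Gaussian matrix with $n\ge k$, Theorem \ref{thm:norm} gives $\|\Omega\|_2 = \mathcal{O}(\sqrt{n})$, not $\mathcal{O}(\sqrt{k})$, so the product $\|\Omega_2\|_2\,\|\Omega_1^\dagger\|_2$ is $\mathcal{O}\bigl(\sqrt{n/k}/\eta\bigr)$ rather than an absolute constant. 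This does not break the theorem, because the notation $\mathcal{O}_\sigma(\cdot)$ is explicitly allowed to depend on $n$ and $k$ (see the remark following the theorem, and the factor $\sqrt{n/k_2}$ that appears for the same reason in the proof of Theorem \ref{thm:correctnes_SVD}), but you should not call the hidden factor dimension-free.
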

\begin{remark}
	Note that the notation $\mathcal{O}_\sigma(\sigma_{r+1})$ means that the error does not depend on the singular values except of having a linear dependency on $\sigma_{r+1}$. Dependency exists on $n$ and $k$.	
\end{remark}

\begin{rem}\label{rem:omega1_omega2}
Note that if $\Omega_1 \in M_{l \times n}$ and $\Omega_2\in M_{k
\times l}$  satisfy Eq. \eqref{eq:A_QQA} for $A$ of size $m \times n$ and $m \times l$ and $r \in \NN$, respectively, then $\Omega = \Omega_2 \Omega_1$ also satisfies Eq. \eqref{eq:A_QQA} for $A \in M_{m \times n}$ and a rank $r$. This fact is important since it enables us to combine random
matrices by utilizing for example a subsampled randomized
Fourier transform (SRFT) \cite{woolfe2008fast} matrix or a Gaussian matrix with sub-Gaussian matrix. Similar statement is introduced in \cite{clarkson2013low} as Fact 45.
\end{rem}

From Theorem \ref{thm:A_QQA} it follows that the randomized SVD  Algorithm 5.1 in \cite{halko2011finding} is valid for sub-Gaussian matrices.
This algorithm does not take advantage of the fact that $\Omega$ can
be a sparse matrix. Thus, Algorithm 5.1 can be adapted similarly to
the algorithm in Theorem 47 \cite{clarkson2013low} and to the LU
decomposition algorithm \cite{sparseLU}. For SVD
approximation to be of rank $r$, we use the following version of
Weyl's inequality:
\begin{theorem}[Weyl inequality for singular values]
    Let $A,B \in M_{m \times n}$. If $\|A-B\|_2 \leq \e$, then for $1 \leq k \leq \min(m,n)$,  $| \sigma_k(A) - \sigma_k(B)| \leq
    \e$ holds.
\end{theorem}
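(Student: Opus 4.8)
The plan is to establish both one-sided estimates $\sigma_k(A) \le \sigma_k(B) + \e$ and $\sigma_k(B) \le \sigma_k(A) + \e$, after which the desired bound $|\sigma_k(A) - \sigma_k(B)| \le \e$ follows immediately by combining them. The natural tool is the Courant--Fischer variational characterization of singular values, which expresses each $\sigma_k$ purely through the action of the matrix on unit vectors:
$$
\sigma_k(A) = \min_{\substack{S \subseteq \R^n \\ \dim S = n-k+1}} \ \max_{\substack{x \in S, \ \|x\|=1}} \norm{Ax}_2 .
$$
The whole argument hinges on transporting a pointwise comparison between $A$ and $B$ through this formula.

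First I would record the only place where the hypothesis enters: for every unit vector $x$, the reverse triangle inequality gives
$$
\left| \norm{Ax}_2 - \norm{Bx}_2 \right| \le \norm{(A-B)x}_2 \le \norm{A-B}_2 \le \e,
$$
so in particular $\norm{Ax}_2 \le \norm{Bx}_2 + \e$ holds uniformly over all unit $x$. Next I would feed this uniform bound into the variational formula. Since the estimate is valid for \emph{every} unit vector, it survives taking the inner maximum over any fixed subspace $S$ and then the outer minimum over all subspaces of dimension $n-k+1$; the additive constant $\e$ simply passes unchanged through both operations. This yields $\sigma_k(A) \le \sigma_k(B) + \e$. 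The hypothesis $\norm{A-B}_2 \le \e$ is symmetric in $A$ and $B$, so swapping their roles gives $\sigma_k(B) \le \sigma_k(A) + \e$, and the two together complete the proof.

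This is a classical perturbation fact, so I do not expect a genuine obstacle. The only points requiring mild care are stating the min--max characterization with the correct subspace dimension ($n-k+1$ for the $k$-th largest singular value, so that $\sigma_1$ is the global maximum of $\norm{Ax}_2$) and verifying that the additive $\e$ really does commute with both the max-over-$S$ and the min-over-subspaces, which it does because it is a constant independent of $x$ and of $S$. One may equivalently use the max--min form $\sigma_k(A) = \max_{\dim S = k} \min_{x \in S, \|x\|=1} \norm{Ax}_2$; the reasoning is verbatim the same, and either route avoids any reliance on an explicit eigenvector basis.
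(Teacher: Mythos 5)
Your proof is correct and follows essentially the same route as the paper's: a Courant--Fischer variational characterization of $\sigma_k$, the reverse triangle inequality $\bigl|\,\|Ax\|-\|Bx\|\,\bigr|\le\|A-B\|_2\le\e$ applied uniformly over unit vectors, and symmetrization in $A$ and $B$. If anything your write-up is the more careful one: the paper states the min--max principle with a mismatched subspace dimension (its $\max_{\dim S=n-k+1}\min_x$ form would characterize $\sigma_{n-k+1}$ rather than $\sigma_k$) and asserts that every such $S$ contains a unit vector with $\|Ax\|=\sigma_k(A)$ exactly, whereas you correctly note that the additive $\e$ simply passes through both the inner max and the outer min.
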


\begin{proof}
    We prove it by using the min-max principle that for any matrix $A \in M_{m \times n}$, if $\|A-B\|_2 \leq \e$ then $| \sigma_k(A) - \sigma_k(B)| \leq
    \e$.

    The min-max principle states that
    $$
    \sigma_k(A) = \max\limits_{\substack{S\\ \dim S = n-k+1}} \min\limits_{\substack{x\in S,\\ \|x\| = 1}} \|Ax\|.
    $$
    For any matrix $S$ of dimension $n-k+1$, we show that there exists a vector such that $\|Bx\| \leq \sigma_k(A) + \e$.
    For any such $S$, there is a vector $x \in S$ such that  $\|Ax\| = \sigma_k(A)$. Note that
    $$
    \e \geq \|A-B\| \geq \|(A-B)x\| \geq | \|Ax\| - \|Bx\| | = |\sigma_k(A) -  \|B\| |.
    $$
    Thus, for any $S$ of dimension $n-k+1$,  $\min\limits_{x\in S, \|x\| = 1} \|Bx\| \leq \sigma_k(A) + \e$.
    Therefore,
    \begin{equation}\label{eq:sigB_leq_sigA_eps}
    \sigma_k(B) \leq \sigma_k(A) + \e.
    \end{equation}
    By repeating these considerations symmetrically for $A$ with respect to $B$, we have that   $\sigma_k(A) \leq \sigma_k(B) +
    \e$.
    Together with Eq. \ref{eq:sigB_leq_sigA_eps}, we get $| \sigma_k(A) - \sigma_k(B)| \leq
    \e$.
\end{proof}

\begin{corollary}\label{cor:rank_r_approx}
    If  $\| A-U\Sigma V^* \|_2 \le \mathcal{O}_\sigma(\sigma_{r+1}(A))$, then  $\| A-U[\Sigma]_r V^* \|_2 \le \mathcal{O}_\sigma(\sigma_{r+1}(A))$ where $[\Sigma]_r$ is the best rank $r$ approximation of $\Sigma$.
\end{corollary}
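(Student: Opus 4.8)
The plan is to reduce the truncated approximation error to the original one via the triangle inequality, and then to control the additional truncation term using the Eckart--Young theorem together with the Weyl inequality proved just above. First I would write
$$
\| A-U[\Sigma]_r V^* \|_2 \le \| A-U\Sigma V^* \|_2 + \| U\Sigma V^* - U[\Sigma]_r V^* \|_2 .
$$
The first summand is $\mathcal{O}_\sigma(\sigma_{r+1}(A))$ by hypothesis, so the whole problem collapses to bounding the second summand.

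Next I would use that $U$ and $V$ have orthonormal columns (as they do for the SVD output by the algorithm), so that left multiplication by $U$ and right multiplication by $V^*$ leave the spectral norm unchanged. This gives
$$
\| U\Sigma V^* - U[\Sigma]_r V^* \|_2 = \| U(\Sigma - [\Sigma]_r)V^* \|_2 = \| \Sigma - [\Sigma]_r \|_2 .
$$
Since $[\Sigma]_r$ is the best rank $r$ approximation of $\Sigma$, the Eckart--Young theorem yields $\| \Sigma - [\Sigma]_r \|_2 = \sigma_{r+1}(\Sigma)$, and by the same orthonormality the singular values of $\Sigma$ coincide with those of $U\Sigma V^*$, so $\sigma_{r+1}(\Sigma) = \sigma_{r+1}(U\Sigma V^*)$.

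Finally I would invoke Weyl's inequality with $A$ and $B = U\Sigma V^*$, taking $\e = \| A-U\Sigma V^* \|_2$. This gives $|\sigma_{r+1}(A) - \sigma_{r+1}(U\Sigma V^*)| \le \e \le \mathcal{O}_\sigma(\sigma_{r+1}(A))$, hence $\sigma_{r+1}(U\Sigma V^*) \le \sigma_{r+1}(A) + \mathcal{O}_\sigma(\sigma_{r+1}(A)) = \mathcal{O}_\sigma(\sigma_{r+1}(A))$. Combining the two summands then produces $\| A-U[\Sigma]_r V^* \|_2 \le \mathcal{O}_\sigma(\sigma_{r+1}(A))$, as claimed.

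The argument is routine once the reduction is in place; the one point demanding care is the tacit assumption that $U$ and $V$ have orthonormal columns, which is precisely what makes both the spectral norm and the singular values invariant under the two-sided multiplication. That invariance is the linchpin that rewrites the truncation error as $\sigma_{r+1}(\Sigma)$ and then transfers it to $\sigma_{r+1}(A)$ through Weyl. I would also note that $\mathcal{O}_\sigma$ is closed under adding a constant multiple of $\sigma_{r+1}(A)$, which holds by the definition in the remark following Theorem \ref{thm:A_QQA}, since that notation permits a linear dependence on $\sigma_{r+1}(A)$ with coefficients depending only on $n$ and $k$.
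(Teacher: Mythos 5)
Your proof is correct and follows essentially the same route as the paper's: triangle inequality, Eckart--Young to identify the truncation term with $\sigma_{r+1}(U\Sigma V^*)$, and Weyl's inequality to transfer that back to $\sigma_{r+1}(A)+\mathcal{O}_\sigma(\sigma_{r+1}(A))$. The only difference is that you spell out the orthonormality of the columns of $U$ and $V$ needed to equate $\| U\Sigma V^* - U[\Sigma]_r V^* \|_2$ with $\sigma_{r+1}(U\Sigma V^*)$, which the paper leaves implicit.
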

\begin{proof}
    \begin{eqnarray}
    \| A - U [\Sigma]_r V^* \|_2 & = &\| A  -  U \Sigma V^*+U\Sigma V^* -  U [\Sigma]_r V^* \|_2 \notag \\
    &\leq &\| A  -  U \Sigma V^* \|_2   +   \| U\Sigma V^* -  U [\Sigma]_r V^* \|_2\notag   \\
    &\leq & O(\sigma_{r+1}(A)) +  \sigma_{r+1}(B)\notag \\
    &\leq & O(\sigma_{r+1}(A)) +  \sigma_{r+1}(A) +  O(\sigma_{r+1}(A)) \notag \\
    & =   & O(\sigma_{r+1}(A)) \notag .
    \end{eqnarray}
\end{proof}

Algorithm \ref{alg:sparse_randomized_SVD} describes a randomized SVD
decomposition for getting a rank $r$ approximation. This approximation
generates the error $\mathcal{O}_\sigma(\sigma_{r+1}(A))$.
Theorem \ref{thm:correctnes_SVD} proves that the algorithm is correct for any matrix distribution that holds the conditions of Theorem \ref{thm:A_QQA}. Its complexity is evaluated in section \ref{ssec:SVD_complexity}.
Numerical results are given in section \ref{subsec:numerical_results}.

\begin{algorithm}[H]
    \caption{Sub-Gaussian-based Randomized SVD Decomposition}
    \label{alg:sparse_randomized_SVD}
    \textbf{Input:} $A$ matrix of size $m \times n$ to decompose, $r$  desired rank, $k_1,k_2,l$ number of columns to use.\\
    \textbf{Output:} Matrices $U, \Sigma, V$ such that $\Vert A-U\Sigma V^* \Vert_2 \le \mathcal{O}_\sigma(\sigma_{r+1}(A))$ where $U$ and $V$ are matrices with orthonormal columns.
    \begin{algorithmic}[1]
        \STATE Create a random sub-Gaussian matrix $\Omega_1$ of size $k_1 \times n$.
        \STATE Create a random Gaussian matrix $\Omega_1^\prime$ of size $l \times k_1$.
        \STATE Compute $B = A\Omega_1^*\Omega_1^{\prime*}$ ($B\in M_{m\times l}$).
        \STATE Compute the QR decomposition: $B = QR$, $Q\in M_{m\times k_1}$ with orthonormal columns, $R\in M_{k_1\times k_1}$ is a full rank upper triangular matrix.
        \STATE Create a random sub-Gaussian matrix $\Omega_2$ of size $k_2 \times m$.
        \STATE Compute $\Omega_2 Q$, $\Omega_2 A$ and $(\Omega_2 Q)^{\dagger}$.
        \STATE Compute the SVD of $(\Omega_2 Q)^{\dagger}\Omega_2 A = \tilde{U}_1\Sigma_1 V_1^*$.
        \STATE $\tilde{U} \gets \tilde{U}_1(:,1:r)$.
        \STATE $\Sigma \gets \Sigma_1(1:r,1:r)$.
        \STATE $V \gets V_1(:,1:r)$.
        \STATE $U \gets Q\tilde{U}$.
    \end{algorithmic}
\end{algorithm}

\begin{theorem}\label{thm:correctnes_SVD}
    Assume that $A$ is a matrix of size $m \times n$ where $m<n$ and $r<m$. Then for $k_1, k_2 = \mathcal{O}\left( \log\left(\frac{1}{p}\right)\left(r + \frac{1}{p^3}\right) \right)$and $l = \mathcal{O}(r)$, Algorithm \ref{alg:sparse_randomized_SVD} outputs $U,\Sigma$ and $V$ such that $ \Vert A-U\Sigma V^* \Vert_2 \le \mathcal{O}_\sigma(\sigma_{r+1}(A))$.
\end{theorem}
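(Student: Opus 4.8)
The plan is to split the analysis into the two sketching phases of Algorithm \ref{alg:sparse_randomized_SVD} and to show that each introduces an error of order $\mathcal{O}_\sigma(\sigma_{r+1}(A))$. First I would establish that the orthonormal matrix $Q$ produced in steps 1--4 already captures the dominant range of $A$, i.e. that $\norm{A - QQ^*A}_2 \le \mathcal{O}_\sigma(\sigma_{r+1}(A))$ with high probability. The matrix applied to $A$ is the composition $\Omega_1^*\Omega_1'^*$, where $\Omega_1^*$ is sub-Gaussian and $\Omega_1'^*$ is Gaussian. By Theorem \ref{thm:rectangular_small_val_subspace} (with $k_1 = \mathcal{O}(\log(1/p)(r + 1/p^3))$) the sub-Gaussian factor is bounded below on every $\mathcal{O}(r)$-dimensional subspace, and by Theorem \ref{thm:norm} it is bounded above; hence it satisfies the hypotheses of Theorem \ref{thm:A_QQA}. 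The Gaussian factor satisfies them classically for $l = \mathcal{O}(r)$, and Remark \ref{rem:omega1_omega2} shows the composition also satisfies Eq. \eqref{eq:A_QQA}. This yields the desired bound on $\norm{A - QQ^*A}_2$.

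The core of the proof is the second phase, which replaces the exact projection $Q^*A$ by the cheaply computable $X \newdef (\Omega_2 Q)^\dagger \Omega_2 A$. Since the columns of $Q$ span a subspace of dimension $\mathcal{O}(r)$, Theorem \ref{thm:rectangular_small_val_subspace} applied to $\Omega_2$ (again with $k_2 = \mathcal{O}(\log(1/p)(r + 1/p^3))$) guarantees that $\sigma_{\min}(\Omega_2 Q) \ge M\eta\sqrt{k_2} > 0$ with high probability, so $\Omega_2 Q$ has full column rank and $(\Omega_2 Q)^\dagger (\Omega_2 Q) = I$. Writing $A = QQ^*A + (A - QQ^*A)$ and left-multiplying by $(\Omega_2 Q)^\dagger\Omega_2$, the first term collapses to $Q^*A$ and
\begin{equation*}
X - Q^*A = (\Omega_2 Q)^\dagger \Omega_2 (A - QQ^*A).
\end{equation*}

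I would then bound the residual multiplicatively:
\begin{equation*}
\norm{X - Q^*A}_2 \le \norm{(\Omega_2 Q)^\dagger}_2 \, \norm{\Omega_2}_2 \, \norm{A - QQ^*A}_2 \le \frac{1}{M\eta\sqrt{k_2}} \cdot C_0\sqrt{m} \cdot \mathcal{O}_\sigma(\sigma_{r+1}(A)),
\end{equation*}
using $\norm{(\Omega_2 Q)^\dagger}_2 = 1/\sigma_{\min}(\Omega_2 Q)$ from the step above and $\norm{\Omega_2}_2 \le C_0\sqrt{m}$ from Theorem \ref{thm:norm}. The prefactor $\sqrt{m}/(\eta\sqrt{k_2})$ depends only on the ambient and sketch dimensions, hence is absorbed into the $\mathcal{O}_\sigma$ notation, giving $\norm{X - Q^*A}_2 \le \mathcal{O}_\sigma(\sigma_{r+1}(A))$. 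Because $Q$ has orthonormal columns, $\norm{Q(Q^*A - X)}_2 = \norm{Q^*A - X}_2$, so by the triangle inequality $\norm{A - QX}_2 \le \norm{A - QQ^*A}_2 + \norm{Q^*A - X}_2 \le \mathcal{O}_\sigma(\sigma_{r+1}(A))$.

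Finally I would handle the rank-$r$ truncation. The SVD computed in step 7 gives $X = \tilde U_1 \Sigma_1 V_1^*$, so $QX = (Q\tilde U_1)\Sigma_1 V_1^*$ is itself an SVD of $QX$ since $Q\tilde U_1$ has orthonormal columns. The algorithm outputs exactly the rank-$r$ truncation $U\Sigma V^* = (Q\tilde U_1)[\Sigma_1]_r V_1^*$, so Corollary \ref{cor:rank_r_approx} applied to the full decomposition $QX$ (which we just bounded) preserves the error and yields $\norm{A - U\Sigma V^*}_2 \le \mathcal{O}_\sigma(\sigma_{r+1}(A))$. A union bound over the finitely many high-probability events coming from Theorems \ref{thm:rectangular_small_val_subspace}, \ref{thm:norm} and \ref{thm:A_QQA} makes all estimates hold simultaneously. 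The main obstacle I anticipate is the second phase: one must certify that $\Omega_2 Q$ is well-conditioned on the range of $Q$ and then verify that the multiplicative blow-up $\norm{(\Omega_2 Q)^\dagger}_2\norm{\Omega_2}_2$ contributes only dimension-dependent factors that the $\mathcal{O}_\sigma$ bound is permitted to hide, rather than factors growing with the singular values of $A$.
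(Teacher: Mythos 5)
Your proposal is correct and follows essentially the same route as the paper's proof: the same two-phase split, the same identity $(\Omega_2 Q)^\dagger\Omega_2(A-QQ^*A) = X - Q^*A$ relying on left-invertibility of $\Omega_2 Q$ from Theorem \ref{thm:rectangular_small_val_subspace}, the same multiplicative bound via $\|(\Omega_2 Q)^\dagger\|_2\|\Omega_2\|_2$, and the same appeal to Corollary \ref{cor:rank_r_approx} for the truncation. (Your $\|\Omega_2\|_2 \le C_0\sqrt{m}$ is in fact the dimensionally correct version of the paper's $C_0\sqrt{n}$.)
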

\begin{proof}

For a matrix $A \in M_{m \times n}$, let $\Omega_1 \in M_{k_1 \times n}$ be a sub-Gaussian matrix and let $\Omega_1^\prime \in M_{l \times k_1}$ be a random Gaussian matrix. Denote the QR-decomposition of
$A\Omega_1^*\Omega_1^{\prime*} \in M_{m \times l}$ by $QR = A\Omega_1^* \Omega_1^{\prime*}$. From
Theorem \ref{thm:A_QQA}, Remark \ref{rem:omega1_omega2} and Theorem 10.8 of \cite{halko2011finding}, it follows that for $l = \mathcal{O}(r)$
\begin{equation}\label{eq:A_QQA2}
\|QQ^*A - A\|_2 \leq \mathcal{O}_\sigma(\sigma_{r+1}) 
\end{equation}
holds.
From Theorem \ref{thm:rectangular_small_val_subspace} it follows that
for a  sub-Gaussian matrix $\Omega_2 \in M_{k_2 \times m}$, where
$k_2 = \mathcal{O}(k_1)$, the matrix $\Omega_2 Q$ is invertible
from the left, namely $(\Omega_2 Q)^\dagger \Omega_2 Q = I_{k_1
\times k_1}$. Thus, $ \|QQ^*A - A\|_2 = \|Q(\Omega_2
Q)^{\dagger}(\Omega_2 Q)Q^*A - A\|_{2} . $ From the construction $
\|U_1 \Sigma_1 V_1^* - A\|_2 =\|Q \tilde{U}\Sigma V^* - A\|_2
=\|Q(\Omega_2 Q)^{\dagger}\Omega_2A - A\|_2. $  $\|Q(\Omega_2 Q)^{\dagger}\Omega_2A - A\|_2$
is bounded in the following way:

\begin{eqnarray}
    \|Q(\Omega_2 Q)^{\dagger}\Omega_2 A- A\|_2 & \leq & \|Q(\Omega_2 Q)^{\dagger}\Omega_2 A - Q(\Omega_2 Q)^{\dagger}(\Omega_2 Q)Q^*A \notag\\
    &      & + Q(\Omega_2 Q)^{\dagger}(\Omega_2 Q)Q^*A - A\|_2 \notag\\
    & =    & \|Q(\Omega_2 Q)^{\dagger}\Omega_2 (A -QQ^*A) + QQ^*A - A\|_2 \notag \\
    \mbox{by the triangle inequality}
    & \leq & \|Q(\Omega_2 Q)^{\dagger}\Omega_2 (A-QQ^*A)\|_2 +\|QQ^*A - A\|_2 \notag\\
    \mbox{since } \|AB\|_2 \leq \|A\|_2\|B\|_2
    & \leq & \|Q(\Omega_2 Q)^{\dagger}\Omega_2 \|_2\|A-QQ^*A\|_2 +\|QQ^*A - A\|_2\notag \\
    & = & (\|(\Omega_2 Q)^{\dagger}\Omega_2 \|_2 + 1)\|A-QQ^*A\|_2\notag\\
    \mbox{since } \|AB\|_2 \leq \|A\|_2\|B\|_2
    & \leq & (\|(\Omega_2 Q)^{\dagger}\|_2\|\Omega_2 \|_2 + 1)\|A-QQ^*A\|_2
    \label{eq:Q_omega_Q_omega_A__A}.
\end{eqnarray}

From Theorem \ref{thm:rectangular_small_val_subspace} we have $ \|
(\Omega_2  Q )^{\dagger}\|_2 \leq 1/(c_1 \sqrt{k_2})$. From Theorem
\ref{thm:norm} we have $\|\Omega_2 \|_2 \leq C_0\sqrt{n}$. Thus,
from Eq. \eqref{eq:Q_omega_Q_omega_A__A} it follows that
$$
\|Q(\Omega_2 Q)^{\dagger}\Omega_2 A- A\|_2 \leq \left(\frac{C_0}{c_1}
\sqrt{\frac{n}{k_2}} + 1\right)\|A-QQ^*A\|_2 .
$$
Together with Eq. \eqref{eq:A_QQA2} we get that $ \|U_1 \Sigma_1 V_1^* -
A\|_2  \leq \mathcal{O}_\sigma(\sigma_{r+1}) $.

From the result of Corollary \ref{cor:rank_r_approx} we get $ \|U
\Sigma V^* - A\|_2  \leq \mathcal{O}_\sigma(\sigma_{r+1}).
$
\end{proof}
\begin{remark}
A bound for the Frobenius norm $ \Vert A-U\Sigma V^* \Vert_F \le \mathcal{O}_\sigma(\Delta_{r+1}(A))$ is reached similarly by using Eq. \eqref{eq:A_QQA_F}.
\end{remark}
\subsubsection{Computational Complexity of Algorithm \ref{alg:sparse_randomized_SVD}}\label{ssec:SVD_complexity}
For computational complexity estimation and implementation, the internal random matrix distribution of the algorithm is selected as a subclass
of sparse sub-Gaussian matrices. We chose sparse-Gaussian
matrices. Sparse-Gaussian matrices are sparse matrices, where each entry is
i.i.d with probability $1-p $ to be zero and standard Gaussian otherwise. The complexity of each step in Algorithm \ref{alg:sparse_randomized_SVD} is shown in Table \ref{tab:complexity}

\begin{table}[h]
\caption{Complexity of Algorithm \ref{alg:sparse_randomized_SVD} }
\label{tab:complexity}
\centering
\begin{tabular}{|l|c|c|}
    \hline  Step in Algorithm \ref{alg:sparse_randomized_SVD} & $A$ sparse &  $A$ dense \\
    \hline \hline Creation of sparse matrix $\Omega_1$ of size $k_1 \times n$  & $\mathcal{O}(n)$ & $\mathcal{O}(n)$\\
    \hline Computation of $B = A\Omega_1^*$  & $\mathcal{O}(\nnz(A)pk_1 + m k_1 l)$ & $\mathcal{O}(mnpk_1 + m k_1 l)$  \\
    \hline Computation of its QR- decomposition, $B = QR$ & $\mathcal{O}(mk_1^2)$ &$\mathcal{O}(mk_1^2)$\\
    \hline Creation of sparse matrix $\Omega_2$ of size $k_2 \times m$  & $\mathcal{O}(m)$ &$\mathcal{O}(m)$\\
    \hline Computation of $\Omega_2 Q$, $\Omega_2 A$  & $\mathcal{O}(mk_2 + \nnz(A)pk_2)$ & $\mathcal{O}(mk_2 + mnpk_2)$ \\
    \hline Computation of $(\Omega_2 Q)^{\dagger}\Omega_2 A$ & $\mathcal{O}(k_1k_2^2 + nk_2)$ & $\mathcal{O}(k_1k_2^2 + nk_2)$ \\
    \hline Computation of the SVD of $(\Omega_2 Q)^{\dagger}\Omega_2 A$  & $\mathcal{O}(nk_2^2)$ & $\mathcal{O}(nk_2^2)$\\
    \hline
\end{tabular}
\end{table}
\bigskip

The total complexity is $\mathcal{O}(\nnz(A)pk + (m+n)k^2 ) $. For example, for sub-Gaussian random matrices with $p = \mathcal{O}(\frac{1}{\sqrt[3]{r}})$ and $ k = \mathcal{O}(r\log r)$, the complexity is
$ \mathcal{O}(\nnz(A)\sqrt[3]{r^2} \log r + (m+n)(r\log r)^2 )$.

For the OSE defined in \cite{nelson2013osnap}, the asymptotic complexity is the same as in  \cite{nelson2013osnap}. We show in Section \ref{subsec:numerical_results} that although the asymptotic complexity is the same, Algorithm \ref{alg:sparse_randomized_SVD} is faster in practice.

\subsection{Sub-Gaussian based Randomized LU decomposition} \label{sec:Sparse_Randomized_LU}
Theorem \ref{thm:A_QQA} is equivalent to Theorem 3.1 in \cite{sparseLU} where $L_2$ norm is used instead of Frobenius norm. A sub-Gaussian distribution can be used instead of the sparse embedding matrix distribution. Since the correctness proof of the algorithm in \cite{sparseLU} is based on Theorem 3.1, it is also applicable for sub-Gaussian 
matrices. 
\begin{theorem}\label{thm:lu_approx}
    Assume that sub-Gaussian random matrices are used instead of sparse embedding matrices in the approximated rank $r$ LU decomposition in \cite{sparseLU}. Then, for any $r \in \NN$, and for any matrix $A \in M_{m \times n}$, the approximated rank $r$ LU decomposition results in matrices $L$ and $U$ and permutations $P$ and $Q$ such that
    $
    \Vert PAQ- LU \Vert_2 \le \mathcal{O}_\sigma(\sigma_{r+1}(A)).
    $
\end{theorem}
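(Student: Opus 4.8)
The plan is to prove Theorem~\ref{thm:lu_approx} by transporting the entire machinery already developed for the SVD algorithm (Theorem~\ref{thm:correctnes_SVD}) into the framework of the randomized LU decomposition of~\cite{sparseLU}. The key observation, which I would state first, is that the correctness of the LU algorithm in~\cite{sparseLU} rests exclusively on their Theorem~3.1, and that Theorem~3.1 is structurally identical to our Theorem~\ref{thm:A_QQA} once the Frobenius norm is replaced by the $L_2$ norm. Since Theorem~\ref{thm:A_QQA} requires only two ingredients --- that $\min_{x \in U} \|\Omega x\|_2$ is bounded from below with high probability, and that $\|\Omega\|_2$ is bounded from above with high probability --- the argument reduces to verifying that a sub-Gaussian matrix $\Omega$ supplies both bounds. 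The lower bound is exactly the content of Theorem~\ref{thm:rectangular_small_val_subspace}, and the upper bound is Theorem~\ref{thm:norm}. Thus the essential work has already been done.

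Concretely, I would proceed in three steps. First, I would invoke Theorem~\ref{thm:rectangular_small_val_subspace} to conclude that for a sub-Gaussian $\Omega \in M_{k \times n}$ with $k > D\log\left(\frac{1}{p}\right)\left(r + \frac{1}{p^3}\right)$, the smallest singular value on any $r$-dimensional subspace $U$ satisfies $\min_{x \in U, \|x\|=1} \|\Omega x\|_2 > M\eta\sqrt{k}$ with probability tending to $1$. Second, I would apply Theorem~\ref{thm:norm} to obtain $\|\Omega\|_2 \le C_0\sqrt{n}$ with high probability. Together these are precisely the hypotheses of Theorem~\ref{thm:A_QQA}, so the sub-Gaussian $\Omega$ satisfies the conservation property $\|A - QQ^*A\|_2 \le \mathcal{O}_\sigma(\sigma_{r+1}(A))$ required by the analogue of Theorem~3.1 in~\cite{sparseLU}. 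Third, I would substitute this verified embedding into the LU algorithm of~\cite{sparseLU} in place of their sparse embedding matrix, and since their correctness proof uses only the conservation property abstractly, the resulting permutations $P,Q$ and factors $L,U$ satisfy $\|PAQ - LU\|_2 \le \mathcal{O}_\sigma(\sigma_{r+1}(A))$.

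The main obstacle I anticipate is not in any single estimate but in justifying that the substitution into~\cite{sparseLU} is genuinely transparent --- that is, confirming that nowhere in their analysis do they use a structural property specific to sparse embedding matrices (such as a particular sparsity pattern or a Frobenius-norm guarantee that is strictly stronger than what sub-Gaussian matrices provide). Because our error guarantee is in $L_2$ rather than Frobenius norm, I would need to check that each step of their argument either carries over verbatim under the $L_2$ bound or can be weakened accordingly; this is where the phrase ``Theorem~\ref{thm:A_QQA} is equivalent to Theorem~3.1 in~\cite{sparseLU} where $L_2$ norm is used instead of Frobenius norm'' does the real work and would warrant careful verification. Once that equivalence is established, the remaining combination of Theorems~\ref{thm:rectangular_small_val_subspace} and~\ref{thm:norm} is routine, mirroring the bound derived in the proof of Theorem~\ref{thm:correctnes_SVD}.
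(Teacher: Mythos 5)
Your proposal matches the paper's own argument, which is exactly the short paragraph preceding the theorem: Theorem~\ref{thm:A_QQA} plays the role of Theorem~3.1 in~\cite{sparseLU} (with $L_2$ in place of Frobenius norm), sub-Gaussian matrices satisfy its hypotheses via Theorems~\ref{thm:rectangular_small_val_subspace} and~\ref{thm:norm}, and the LU correctness proof depends only on that theorem, so the substitution goes through. You are in fact somewhat more explicit than the paper, which gives no separate proof and leaves the verification of the hypotheses and the caveat about the norm change implicit.
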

The complexity of the algorithm, as shown in \cite{sparseLU}, is $\mathcal{O}(\nnz(A)pk + (m+n)k^2 )$.

\section{Numerical Results} \label{subsec:numerical_results}
The results in this paper are valid to all types of i.i.d sub-Gaussian
matrices and OSE distributions.  In the current implementation, we used sparse-Gaussian matrices, where each entry in the matrix is i.i.d with probability $p $
to be standard Gaussian and zero otherwise. Note that this distribution is like the distribution in Definition \ref{def:sub_gauss_X} up to a multiplicative constant that does not affect Algorithm \ref{alg:sparse_randomized_SVD}.

We noticed that in practice, for the decomposition of specific matrices, the use of internal random matrix distribution, which has more than one
non-zero entry in a each row (as presented for example here and in
\cite{nelson2013osnap}), results in a much better approximation error than
distributions with one non-zero in each row as in
\cite{clarkson2013low}. This is the reason we use  $p = 3/n$ in the
sparse-Gaussian matrices implementation.

We describe the results from three different experiments. 
All the experiments were implemented on Intel Xeon CPU X5560 2.8GHz. 
All the experiments compare between the running time and the generated error from the following three algorithms in
different scenarios: 
1. The FFT-based algorithm given in \cite{woolfe2008fast}. 
2. The Algorithm from \cite{clarkson2013low}. 
3. Algorithm \ref{alg:sparse_randomized_SVD}.
Although the proven error bounds for Algorithm \ref{alg:sparse_randomized_SVD} are less tight than the bounds for the other algorithms, we see that in practice Algorithm \ref{alg:sparse_randomized_SVD} reaches the same error. 
In all the experiments, the parameters for the different algorithms are chosen such that the reconstruction error rates are similar and aligned to the error from \cite{clarkson2013low} and \cite{woolfe2008fast}. The slowest algorithm has an error that is not smaller than the fastest algorithm.

The experiments that took place are:
\begin{enumerate}
	\item Rank $r$ approximation is computed for a randomly generated full matrix $A\in M_{3000 \times 3000} $
	with singular values that decay exponentially fast from $1$ to  $e^{-50}$.
	Figure \ref{fig:const_n__const_sig} displays the comparison between the running time and the error from rank $r$ approximation 
	from the three algorithms mentioned above.
	The x-axis denotes the rank and the y-axis denotes the running time.
	The results show that for a small rank range \cite{clarkson2013low} is faster than the FFT-based algorithm \cite{woolfe2008fast}. For a larger rank range, the FFT-based algorithm is faster.
	For all ranks, Algorithm  \ref{alg:sparse_randomized_SVD} is  the fastest.

	\begin{figure}[H]
		\makebox[\textwidth][c]{
			\begin{subfigure}[b]{0.6\textwidth}
				\includegraphics[width=\textwidth]{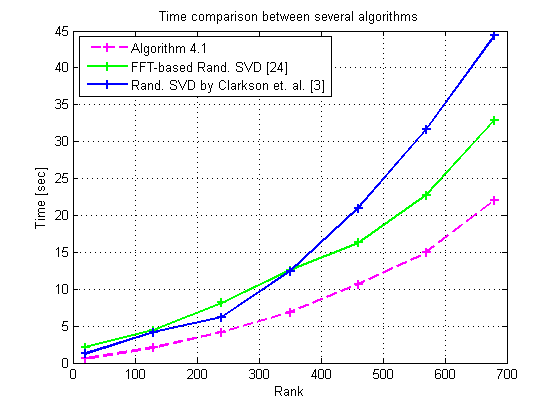}
				\caption{Time}
				\label{fig:const_n__const_sig__n_3000__r_20_110_790__M_1_time}
			\end{subfigure}%
			\begin{subfigure}[b]{0.6\textwidth}
				\includegraphics[width=\textwidth]{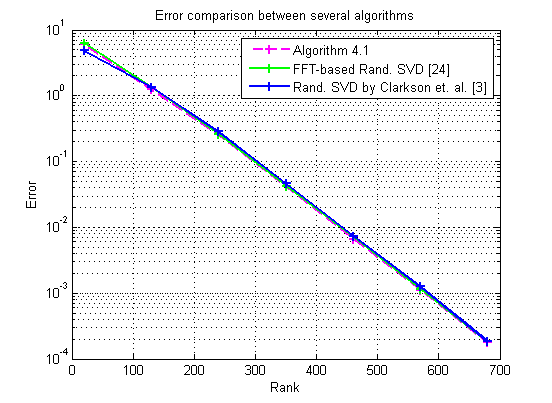}
				\caption{Error}
				\label{fig:const_n__const_sig__n_3000__r_20_110_790__M_1_err}
			\end{subfigure}
		}
		\caption{Results from the  approximation of a  matrix of size $3000 \times 3000$ with exponentially decaying singular values.
			The x-axes in both (a) and (b) denote the rank of the approximation. The y-axis in (a) denotes the run time. The y-axis in (b) denotes the error from the rank approximation.}
		\label{fig:const_n__const_sig}
	\end{figure}

	\item Rank $r$ approximation is computed for a randomly generated full matrix $A\in M_{3000 \times 3000} $ where the first $r$ singular values are 1
	and the other singular values  decay exponentially fast from $e^{-5}$ to  $e^{-50}$.
	Figure \ref{fig:const_n__step_sig__n_3000__r_20_110_790__M_1_time} displays the comparison between the running time for rank $r$ approximation
	for the three algorithms mentioned above. x-axis denotes the rank and the y-axis denotes the running
	time.
	As in experiment 1, for a small rank range, \cite{clarkson2013low} is faster than the FFT-based algorithm \cite{woolfe2008fast}. For a larger rank range, the FFT-based algorithm is
	faster than \cite{clarkson2013low}.
	For all ranks, Algorithm  \ref{alg:sparse_randomized_SVD} is the fastest.

	\begin{figure}[H]
		\makebox[\textwidth][c]{
			\begin{subfigure}[b]{0.6\textwidth}
				\includegraphics[width=\textwidth]{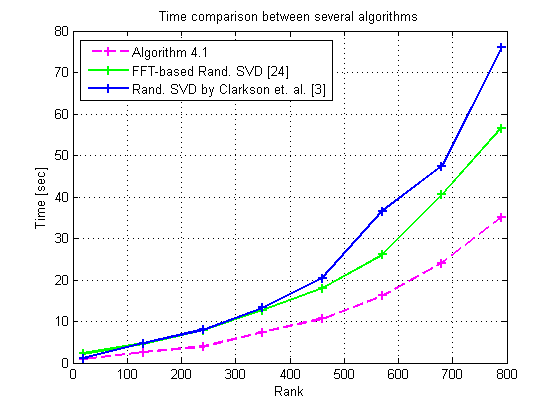}
				\caption{Time}
				\label{fig:const_n__step_sig__n_3000__r_20_110_790__M_1_time}
			\end{subfigure}%
			~
			\begin{subfigure}[b]{0.6\textwidth}
				\includegraphics[width=\textwidth]{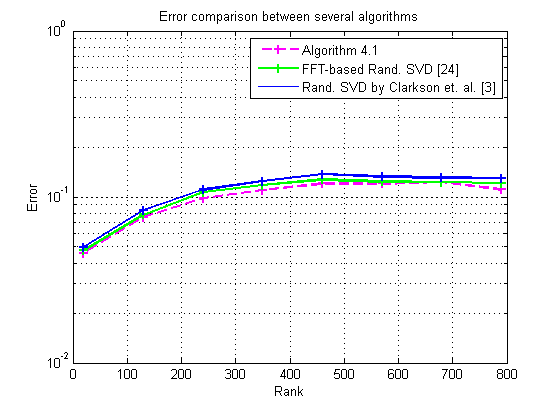}
				\caption{Error}
				\label{fig:const_n__step_sig__n_3000__r_20_110_790__M_1_err}
			\end{subfigure}
		}
		\caption{Results from the approximation of a matrix of size $3000 \times 3000$ with different numerical ranks. The x-axes in both (a) and (b) denote the numerical rank. The y-axis in (a) denotes the run time. The y-axis in (b) denotes the rank approximation error.}
		\label{fig:const_n__step_sig__n_3000__r_30_300}
	\end{figure}

	\item Rank 300 approximation of a randomly generated full matrix $A\in M_{n \times n} $ is computed when the first $300$ singular values are 1
	and the other singular values  decay exponentially fast from $e^{-5}$ to  $e^{-50}$.
	Figure \ref{fig:grow_n__step_sig} displays the comparison between the run time for rank $300$ approximation
	from the three algorithms mentioned above. x-axis denotes the rank and y-axis denotes the running time.
	It is noticeable in this experiment that the sparse SVD from \cite{clarkson2013low} is faster than the FFT-based algorithm \cite{woolfe2008fast} when $n$ increases.
	For rank $300$ and for $n \approx 4500$ the algorithm from \cite{clarkson2013low} is faster than the FFT-based algorithm.
	For ranks larger than $300$, a large $n$ is required for the algorithm from \cite{clarkson2013low} to be faster than the FFT-based algorithm. 
	The Sparse SVD Algorithm \ref{alg:sparse_randomized_SVD} presented in this paper is faster for all $n$.
	
	\begin{figure}[H]
		\makebox[\textwidth][c]{
			\begin{subfigure}[b]{0.6\textwidth}
				\includegraphics[width=\textwidth]{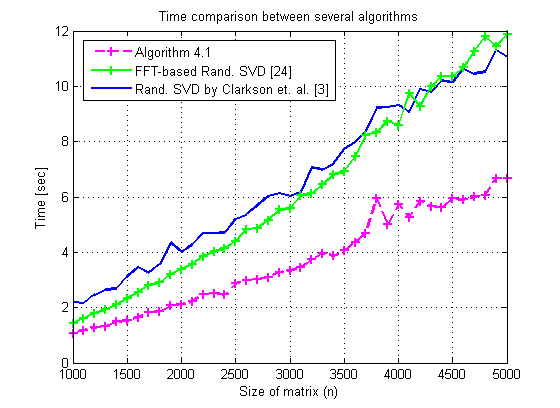}
				\caption{Time}
				\label{fig:grow_n__step_sig__n_1000_100_5000__r_300__M_2_time}
			\end{subfigure}%
			~
			\begin{subfigure}[b]{0.6\textwidth}
				\includegraphics[width=\textwidth]{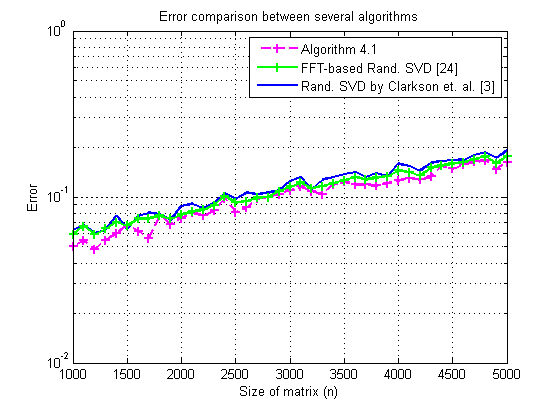}
				\caption{Error}
				\label{fig:grow_n__step_sig__n_1000_100_5000__r_300__M_2_err}
			\end{subfigure}
		}
		\caption{Results from the approximation a matrix of size $n \times n$ , $n = 1000, \ldots, 5000$ with numerical rank 300.
			The x-axis in both (a) and (b) denotes $n$. The y-axis in (a) denotes the run time. The y-axis in (b) denotes the approximation error.}
		\label{fig:grow_n__step_sig}
	\end{figure}
	
\end{enumerate}

In Algorithm \ref{alg:sparse_randomized_SVD}, it is only necessary to apply the matrix $A$ once from the left and once from the right, then $A$ does not have to be stored in memory. Table \ref{tab:large_matrix_results} shows the running time for large matrices that cannot be stored in a computer memory. The matrices we chose have a similar form to the choice in \cite{halko2011algorithm}. We chose $A = F \Sigma F$ where $F$ is the DFT matrix and $\Sigma$ is a diagonal matrix with singular values $\sigma_i$ that decay linearly until $i = 200$ and exponentially from there on. We set $\sum\limits_{i=201}^n \sigma_i$ to be constant in this experiment. Algorithm \ref{alg:sparse_randomized_SVD} is applied to rank $200$ with $k_1 = 500$ and  $k_2 = 700$.

\begin{table}[H]
	\begin{center}
		\begin{tabular}{|c|c|c|c|}
			\hline
			Size ($n$)& Relative Error from Algorithm \ref{alg:sparse_randomized_SVD} & Time for Alg. \ref{alg:sparse_randomized_SVD}(sec) & Time for full SVD \\		\hline
			1,024     & 1.5465 & 1.0011   & 1.5232  \\ \hline
			2,048     & 1.5645 & 1.6236   & 11.3702 \\ \hline
			4,096     & 1.5422 & 2.7653   & 94.6345 \\ \hline
			8,192     & 1.5571 & 5.2999   & 578.2982 \\ \hline
			16,384    & 1.4846 & 12.1065  & 4324.683 \\ \hline
			32,768    & 1.5686 & 26.4022  & \multicolumn{1}{r}{} \\ \cline{1-3}
			65,536    & 1.5074 & 50.6191  & \multicolumn{1}{r}{} \\ \cline{1-3}
			131,072   & 1.4838 & 109.8185 & \multicolumn{1}{r}{} \\ \cline{1-3}
			262,144   & 1.5357 & 205.0068 & \multicolumn{1}{r}{} \\ \cline{1-3}
			524,288   & 1.4854 & 418.4137 & \multicolumn{1}{r}{} \\ \cline{1-3}
			1,048,576 & 1.5240 & 847.8211 & \multicolumn{1}{r}{} \\ \cline{1-3}
		\end{tabular}
		\caption{Comparing running time of Algorithm \ref{alg:sparse_randomized_SVD} to the standard SVD for large matrices. The Relative Error is the ratio between the error from the rank $r$ decomposition and from the $r+1$ singular value.}\label{tab:large_matrix_results} 
	\end{center}
\end{table}
\section*{Conclusion}
\label{sec:conclusion}
We showed that matrices with i.i.d sub-Gaussian entries conserve subspaces and showed the connection between the distribution of the entries and the required size of the matrix. A new algorithm is presented, which yields with high probability, a rank $r$ SVD approximation for an $m \times n$ matrix that achieves an asymptotic complexity of $\mathcal{O}(\nnz(A)pk + (m+n)k^2 )$. Additionally, we showed that the approximated LU algorithm in \cite{sparseLU}, which uses sub-Gaussian random matrices, has a computational complexity of $\mathcal{O}(\nnz(A)pk + (m+n)k^2 )$. We showed in the experiments that although the derived error bounds are not as tight as the bounds from the algorithms in \cite{clarkson2013low,halko2011finding}, in practice, the algorithm in this paper reaches the same error in less time. 

Future work includes non-asymptotic estimation of the algorithm parameters including error estimation improvement to get tighter bounds.

\section*{Acknowledgment}
\noindent This research was partially supported by the Israeli Ministry of Science \& Technology (Grants No. 3-9096, 3-10898), US-Israel Binational Science Foundation (BSF 2012282), Blavatnik Computer Science Research Fund, Blavatink ICRC Funds and  by a Fellowship from Jyv\"{a}skyl\"{a} University. We thank Prof. Jelani Nelson and Dr. Haim Avron for their continues support and constructive remarks.

\bibliographystyle{siam}
\bibliography{SubGaussianDecomp}
\end{document}